\newtheorem{theorem}{Theorem}[section]
\newtheorem{proposition}[theorem]{Proposition}
\newtheorem{corollary}[theorem]{Corollary}
\theoremstyle{definition}
\newtheorem{definition}[theorem]{Definition}
\theoremstyle{remark}
\newtheorem{remark}[theorem]{Remark}
\numberwithin{equation}{section}
\begin{document}
\title{A remarkable $\sigma$-finite measure associated with last passage times and penalisation problems}
\author[J. Najnudel]{Joseph Najnudel}
\address{Institut f\"ur Mathematik, Universit\"at Z\"urich, Winterthurerstrasse 190,
8057-Z\"urich, Switzerland}
\email{\href{mailto:joseph.najnudel@math.uzh.ch}{joseph.najnudel@math.uzh.ch}}
\author[A. Nikeghbali]{Ashkan Nikeghbali}
\email{\href{mailto:ashkan.nikeghbali@math.uzh.ch}{ashkan.nikeghbali@math.uzh.ch}}
 \dedicatory{Dedicated
to Eckhard Platen for his 60th birthday}

\date{\today}
\begin{abstract}
In this paper, we give a global view of the results we have obtained in relation with a remarkable class 
of submartingales, called $(\Sigma)$, and which are stated in \cite{NN1}, \cite{NN2}, \cite{NN3} and \cite{NN4}.
More precisely, we associate to a given submartingale in this class $(\Sigma)$, defined on a filtered probability
space $(\Omega, \mathcal{F}, \mathbb{P}, (\mathcal{F}_t)_{t \geq 0})$, satisfying some technical conditions, 
a $\sigma$-finite measure $\mathcal{Q}$ on $(\Omega, \mathcal{F})$, such that for all 
$t \geq 0$, and for all events $\Lambda_t
 \in \mathcal{F}_t$: $$ \mathcal{Q} [\Lambda_t, g\leq t] = \mathbb{E}_{\mathbb{P}} [\mathds{1}_{\Lambda_t} X_t]$$
where $g$ is the last hitting time of zero of the process $X$. This measure $\mathcal{Q}$ has already been 
defined in several particular cases, some of them are involved in the study of Brownian penalisation, 
and others are related with problems in mathematical finance. More precisely, the existence of $\mathcal{Q}$ 
in the general case solves a problem stated by D. Madan, B. Roynette and M. Yor, in a paper studying the link
between Black-Scholes formula and last passage times of certain submartingales. Once the measure $\mathcal{Q}$
is constructed, we define a family of nonnegative martingales, corresponding to 
the local densities (with respect to $\mathbb{P}$) of the finite measures which are absolutely continuous 
with respect to $\mathcal{Q}$. We study in detail the relation between $\mathcal{Q}$ and this class of martingales,
and we deduce a decomposition of any nonnegative martingale into three parts, corresponding to the 
decomposition of finite measures on $(\Omega, \mathcal{F})$ as the sum of three measures, such that the first one
is absolutely continuous with respect to $\mathbb{P}$, the second one is absolutely continuous with respect to 
$\mathcal{Q}$ and the third one is singular with respect to $\mathbb{P}$ and $\mathcal{Q}$. This decomposition 
can be generalized to supermartingales. Moreover, if under $\mathbb{P}$, the process $(X_t)_{t \geq 0}$ is a diffusion
satisfying some technical conditions, one can state a penalisation result involving the measure $\mathcal{Q}$, and 
 generalizing a theorem given in \cite{NRY}. Now, in the construction of the measure 
$\mathcal{Q}$, we encounter the following problem:
 if $(\Omega,\mathcal{F},(\mathcal{F}_t)_{t \geq 0},\mathbb{P})$
 is a filtered probability space satisfying
 the usual assumptions, then it is usually not possible to extend to $\mathcal{F}_{\infty}$ 
(the $\sigma$-algebra generated by $(\mathcal{F}_t)_{t \geq 0}$) a coherent family
 of probability measures $(\mathbb{Q}_t)$ indexed by $t \geq 0$, each
of them being defined on $\mathcal{F}_t$. That is why we must not assume the usual assumptions in our case. 
On the other hand, the 
usual assumptions are crucial in order to obtain the existence of regular versions of paths (typically
 adapted and continuous or adapted and c\`adl\`ag versions) for most stochastic processes of interest, such
 as the local time of the standard Brownian motion, stochastic integrals, etc.  In order to fix this problem, we introduce another augmentation of filtrations, intermediate between the right continuity
and the usual conditions, and call it N-augmentation in this paper. This augmentation has also been  considered by Bichteler \cite{Bi}.  
Most of the important results of the theory of stochastic processes which are generally proved under
 the usual augmentation still
 hold under the N-augmentation; moreover this new augmentation allows the extension of a coherent family 
of probability measures whenever this is possible with the original filtration. 
\end{abstract}
\maketitle
%\tableofcontents
\section*{Notation}
In this paper, $(\Omega,\mathcal{F},(\mathcal{F}_t)_{t \geq 0},\mathbb{P})$ will denote a filtered
 probability space. $\mathcal{C}(\mathbb{R}_+,\mathbb{R})$ is the space of continuous functions 
from $\mathbb{R}_+$ to $\mathbb{R}$. $\mathcal{D}(\mathbb{R}_+,\mathbb{R})$ is the space of c\`adl\`ag functions
 from $\mathbb{R}_+$ to $\mathbb{R}$. If  $Y$ is a random variable, we denote indifferently by
 $\mathbb{P}[Y]$ or by $\mathbb{E}_{\mathbb P}[Y]$
the expectation of $Y$ with respect to $\mathbb{P}$.
\section{Introduction}
This paper reviews some recent results obtained by Cheridito, Nikeghbali and Platen \cite{CNP} and by the authors of this paper \cite{NN1,NN2,NN3,NN4} on the last zero of some remarkable stochastic processes and its relation with a universal $\sigma$-finite measure which has many remarkable properties and which seems to be in many places the key object to interpret in a unified way results which do not seem to be related at first sight. The last zero of c\`adl\`ag and adapted processes will play an essential role in our discussions: this fact is quite surprising since such a random time is not a stopping time and hence falls outside the domain of applications of the classical theorems in stochastic analysis. 

The problem originally came from a paper of Madan, Roynette and Yor \cite{MRY} on the pricing of European put options where they are able to represent the price of a European put option in terms of the probability distribution of some last passage time. More precisely, they prove that if $(M_t)_{t \geq 0}$ is a continuous nonnegative local martingale  defined on a filtered probability space $(\Omega,\mathcal{F},(\mathcal{F}_t)_{t \geq 0},\mathbb{P})$ satisfying the usual assumptions, and such that  $\lim_{t\to\infty}M_t=0$, then 
\begin{equation}\label{BS}
	(K-M_t)^+=K\mathbb{P}(g_K\leq t|\mathcal{F}_t)
	\end{equation}where $K\geq0$ is a constant and $g_K=\sup\{t \geq 0: M_t=K\}$.  Formula (\ref{BS}) tells that it is enough to know the terminal value of the submartingale $(K-M_t)^+$ and its last zero $g_K$ to reconstruct it.  Yet a nicer interpretation of  (\ref{BS}) is suggested in  \cite{MRY}:  there exists a measure $\mathcal{Q}$, a random time $g$, such that the submartingale $X_t=(K-M_t)^+$ satisfies
\begin{equation}\label{masterequation}
\mathcal{Q} \left[ F_t \, \mathds{1}_{g \leq t} \right] = \mathbb{E} \left[F_t X_t \right],	
\end{equation}
for any $t\geq0$ and for any  bounded $\mathcal{F}_t$-measurable random variable $F_t$. Indeed, it easily follows from (\ref{BS}) that, in this case,  $\mathcal{Q}=K. \mathbb{P}$ and $g=g_K$.  It is also clear that if a stochastic process $X$ satisfies  (\ref{masterequation}), then it is a submartingale. The problem of finding the class of submartingales which satisfy (\ref{masterequation}) is posed in \cite{MRY}:\\

\textbf{Problem 1 (\cite{MRY}):}  for which nonnegative submartingales $X$ can we find a $\sigma$-finite measure $\mathcal{Q}$ and the end of an optional set $g$ such that
\begin{equation}\label{masterequation2}
\mathcal{Q} \left[ F_t \, \mathds{1}_{g \leq t} \right] = \mathbb{E} \left[F_t X_t \right]?
\end{equation}

It is also noticed in \cite{MRY} that other instances of formula (\ref{masterequation}) have already been discovered: for example, in \cite{AY1}, Az\'ema and Yor proved that for any continuous and uniformly integrable martingale $M$, (\ref{masterequation2}) holds for $X_t=|M_t|$, $\mathcal{Q}=|M_\infty|.\mathbb{P}$ and $g=\sup\{t \geq 0: M_t=0\}$, or equivalently $$|M_t|=\mathbb{E}[|M_\infty|\mathds{1}_{g\leq t}|\mathcal{F}_t].$$ Here again the measure $\mathcal{Q}$ is finite. Recently,  another particular case where the measure $\mathcal{Q}$ is not finite  was obtained by  Najnudel, Roynette and Yor
in their study of Brownian penalisation (see \cite{NRY}). They prove the existence of the measure $\mathcal{Q}$ when $X_t=|Y_t|$ is the absolute value of the canonical process $(Y_t)_{t\geq0}$ under the Wiener measure $\mathbb{W}$, on the space $\mathcal{C}(\mathbb{R}_+,\mathbb{R})$ equipped with the filtration generated by the canonical process. In this case, the measure $\mathcal{Q}$, which we shall denote hereafter $\mathcal{W}$ to remind we are working on the Wiener space, is not finite but  $\sigma$-finite and is singular with respect to the Wiener measure: it satisfies $\mathcal{W}(g=\infty)=0$, where $g=\sup\{t \geq 0:\;X_t=0\}$. However, a closer look at this last example reveals that Problem 1 may lead to some paradox. Indeed, the existence of the measure $\mathcal{Q}$ implies that the filtration $(\mathcal{F}_t)_{t\geq0}$  should not satisfy the usual assumptions. Indeed, if this were the case, then for all $t \geq 0$, the event $\{g >
 t\}$ would have probability one (under $\mathbb{W}$) and then, would be in $\mathcal{F}_0$ and, a fortiori, in $\mathcal{F}_t$. 
If one assumes that $\mathcal{W}$ exists, this implies:
$$\mathcal{W} [g > t, g \leq t] = \mathbb{E}_{\mathbb{W}} [\mathds{1}_{g > t} \, X_t],$$
and then
$$ \mathbb{E}_{\mathbb{W}}[X_t] =  0,$$
which is absurd. But now, if one does not complete the original probability space, then it is possible to show (see Section \ref{sec::2}) that there does not exist a continuous and adapted version of the local time for the Wiener process $Y$, which is one of the key processes in the study of Brownian penalisations! More generally, one cannot apply most of the useful  results from the general theory of stochastic processes such as the existence of c\`adl\`ag versions for martingales, the Doob-Meyer decomposition, the d\'ebut theorem, etc. Consequently, one has to provide conditions on the underlying filtered probability space $(\Omega,\mathcal{F},(\mathcal{F}_t)_{t \geq 0},\mathbb{P})$ under which not only equation (\ref{masterequation2}) in Problem 1 can hold, but also under which the existence of regular versions (e.g. continuous and adapted version for the Brownian local time) of stochastic processes of interest do exist:\\

\textbf{Problem 2} What are the natural conditions to impose to $(\Omega,\mathcal{F},(\mathcal{F}_t)_{t \geq 0},\mathbb{P})$ in order to have (\ref{masterequation2})  and at the same time the existence of regular versions for stochastic processes of interest?\\

In fact we shall see that the existence of the measure $\mathcal{Q}$ is related to the problem of extension of a  coherent family of probability measures: given a coherent family of probability measures $(\mathbb{Q}_t)_{t\geq0}$ with $\mathbb{Q}_t$ defined on $\mathcal{F}_t$ (i.e.: the restriction of $\mathbb{Q}_t$ to $\mathcal{F}_s$ is $\mathbb{Q}_s$ for $t\geq s$), does there exist a probability measure $\mathbb{Q}_{\infty}$ defined on $\mathcal{F}_{\infty}=\bigvee_{t\geq0}\mathcal{F}_t$ such that the restriction of $\mathbb{Q}_\infty$ to $\mathcal{F}_t$ is $\mathbb{Q}_t$? The solution to such extension problems is well-known and very well detailed in the book by Parthasarathy \cite{Parth}, however, it is surprising that such a fundamental problem (if one thinks about the widely used changes of probability measures which are only locally absolutely continuous with respect to a reference probability measure) has rarely been considered together with the existence of c\`adl\`ag versions for martingales, the Doob-Meyer decomposition, the d\'ebut theorem, etc. 

In Section \ref{sec::2}, we shall propose an alternative  augmentation of filtrations, intermediate
 between the right-continuous version and the usual augmentation, under which most of the properties generally
proved under usual conditions (existence of c\`adl\`ag versions, existence of Doob-Meyer decomposition, etc.)
 are preserved, but for which it is still possible to extend compatible families of probability measures. We proposed this augmentation in \cite{NN2} and discovered later than Bichteler has also proposed it in his book \cite{Bi}.
By using this  augmentation, we are able to prove the existence of the measure $\mathcal{Q}$ under very
general assumptions. The relevant class of submartingales is called $(\Sigma)$, and the precise conditions
under which we can show that $\mathcal{Q}$ exists are stated in Section \ref{sec::3}. Just before this statement, 
we give a detailed solution of Problem 1 in the particular case where $\mathcal{Q}$ is absolutely continuous
with respect to $\mathbb{P}$, with some applications to financial modeling.

The measure $\mathcal{Q}$ has some remarkable properties which we shall detail in Section  \ref{sec::4}. One of its most striking properties is that it allows a unified treatment of many problems of penalisation on the Wiener space which do not seem to be related at first sight. The framework which is generally used it the following (see the book \cite{NRY} for more details and references): we consider $\mathbb{W}$, the Wiener measure on $\mathcal{C}(\mathbb{R}_+,\mathbb{R})$, endowed with its natural filtration $(\mathcal{F}_s)_{
s \geq 0}$, $(\Gamma_t)_{t \geq 0}$ a family of 
nonnegative random variables on the same space, such that 
$$0 < \mathbb{W} [\Gamma_t]  < \infty,$$
and for $t \geq 0$, the probability measure $$\mathbb{Q}_t := 
\frac{\Gamma_t}{\mathbb{W}[\Gamma_t]} \, . \mathbb{W}.$$
Under these assumptions, Roynette, Vallois and Yor have 
proven (see \cite{RVY}) that for many examples of family of functionals $(\Gamma_t)_{t \geq 0}$, there exists a probability 
measure $\mathbb{Q}_{\infty}$ which can be considered as the weak limit of $(\mathbb{Q}_t)_{t \geq 0}$ 
when $t$ goes to infinity, in the following sense: for all $s \geq 0$ and for all bounded, 
$\mathcal{F}_s$-measurable random variables $F_s$, one has
$$\mathbb{Q}_t [F_s] \underset{t \rightarrow \infty}{\longrightarrow} \mathbb{Q}_{\infty} [F_s].$$
For example, the measure $\mathbb{Q}_{\infty}$ exists for the following families of functionals\footnote{The discussion around Problem 2 shows that the results described below are not correct because a continuous and adapted version of the Brownian local time does not exist with the natural filtration. The conditions given in Section \ref{sec::2} will remedy this gap.}
$(\Gamma_t)_{t \geq 0}$:
\begin{itemize}
\item $\Gamma_t = \phi(L_t)$, where $(L_t)_{t \geq 0}$ is the local time at zero of the canonical process $X$, 
and $\phi$ is a nonnegative, integrable function from $\mathbb{R}_+$ to $\mathbb{R}_+$.
\item $\Gamma_t = \phi(S_t)$, where $S_t$ is the supremum of $X$ on the interval $[0,t]$, and $\phi$ is, again, 
a nonnegative, integrable function from $\mathbb{R}_+$ to $\mathbb{R}_+$. 
\item $\Gamma_t = e^{\lambda L_t + \mu |X_t|}$, where $(L_t)_{t \geq 0}$ is, again, the local time at zero 
of $X$. 
\end{itemize}
In \cite{NRY}, Najnudel, Roynette and Yor obtain a result which gives the existence of $\mathbb{Q}_{\infty}$
for a  large class of families of functionals $(\Gamma_t)_{t \geq 0}$. The proof 
of this penalisation result involves, in an essential way, the $\sigma$-finite measure $\mathcal{W}$ on
the space $\mathcal{C}(\mathbb{R}_+,\mathbb{R})$ described above. More precisely, they 
prove that for a relatively large class of functionals $(\Gamma_t)_{t \geq 0}$, 
\begin{equation}\label{ntm}
\mathbb{Q}_\infty=\dfrac{\Gamma_\infty}{\mathcal{W}[\Gamma_\infty]}. \mathcal{W},
\end{equation}
\noindent 
where $\Gamma_{\infty}$ is the limit of $\Gamma_t$ when $t$ goes to infinity, which is supposed to 
exist everywhere. \\

\textbf{Problem 3} Can we use our general existence theorem on the measure $\mathcal{Q}$ in Section \ref{sec::3} to extend the general result on the Brownian penalisation problem and $\mathcal{W}$ to a larger class of stochastic processes?\\

At end of Section \ref{sec::4}, we shall see that the answer to Problem 3 is positive (which extends (\ref{ntm})),
if under $\mathbb{P}$, the submartingale $(X_t)_{t \geq 0}$ is a diffusion satisfying some technical
conditions. In particular, our result can be applied to suitable powers of
Bessel processes if the dimension is in the interval $(0,2)$ (which includes
 the case of the reflected Brownian motion). Unlike all our other results, for which we are able
 to get rid of the Markov and scaling properties that have been used 
so far in the Brownian studies, the Markov property plays here a crucial role.

\section{A new kind of augmentation of filtrations consistent with the problem of extension of measures}\label{sec::2}
The discussion in this section follows closely our paper \cite{NN2}; in particular the proofs which are not provided can be found there.
\subsection{Understanding the problem}
In stochastic analysis, most of the interesting properties of continuous time random processes cannot be established if 
one does not  assume that their trajectories satisfy some  regularity conditions. For example,
a nonnegative c\`adl\`ag martingale converges almost surely, but if the c\`adl\`ag assumption
is removed, the result becomes false in general. Recall a very simple counter-example: on the filtered
probability space $\big((\mathcal{C}( \mathbb{R}_+, \mathbb{R}), \mathcal{F}, (\mathcal{F}_t)_{t \geq 0}
, \mathbb{W} \big)$, where $\mathcal{F}_t = \sigma \{ X_s, 0 \leq s \leq t\}$, 
$\mathcal{F} = \sigma \{X_s, s \geq 0\}$, $(X_s)_{s \geq 0}$ is the canonical process and $ \mathbb{W}$ the Wiener measure, the 
martingale 
$$\big(M_t := \mathds{1}_{X_t = 1}\big)_{t \geq 0},$$
which is a.s. equal to zero for each fixed $t \geq 0$, does not converge at infinity. 
That is the reason  why one generally  considers a c\`adl\`ag version of  a martingale. 
However there are fundamental examples of stochastic processes for which such a version does not exist. Indeed
let us define on the filtered probability
space $\big((\mathcal{C}( \mathbb{R}_+, \mathbb{R}), \mathcal{F}, (\mathcal{F}_t)_{t \geq 0}
, \mathbb{W} \big)$ described above, the stochastic process $(\mathcal{L}_t)_{t \geq 0}$ as follows:
$$\mathcal{L}_t  = \Phi \left(\underset{m \rightarrow \infty}{\lim \inf} \int_0^t f_m(X_s) ds \right),$$
where $f_m$ denotes the density of a centered Gaussian variable with variance $1/m$ and
$\Phi$ is the function from $\mathbb{R}_+ \cup \{\infty\}$ to $\mathbb{R}_+$ such that 
$\Phi(x) = x$ for $x < \infty$ and $\Phi(\infty) = 0$. The process $(\mathcal{L}_t)_{t \geq 0}$ 
is a version of the local time of the canonical process at level zero, which is defined everywhere
and $(\mathcal{F}_t)_{t \geq 0}$-adapted. It is known that  the process:
$$\big(M_t := |X_t| - \mathcal{L}_t \big)_{t \geq 0}$$
is an $(\mathcal{F}_t)_{t \geq 0}$-martingale. However, $(M_t)_{t \geq 0}$ does not admit 
a c\`adl\`ag version which is adapted. In other words, there exists no c\`adl\`ag, adapted
version $(L_t)_{t \geq 0}$ for the local time at level zero of the canonical process! This 
property can be proved in the following way: let us consider an Ornstein-Uhlenbeck process
 $(U_t)_{t \geq 0}$, starting from zero, and
let us define the process $(V_t)_{t \geq 0}$ by:
$$V_t = (1-t)U_{t/(1-t)}$$
for $t < 1$, and 
$$V_t = 0$$
for $t \geq 1$. This process is a.s. continuous: we denote by $\mathbb{Q}$ its distribution.
One can check the following properties:
\begin{itemize}
\item For all $t \in [0,1)$, the restriction of $\mathbb{Q}$ to $\mathcal{F}_t$ is absolutely continuous
with respect to the corresponding restriction of $\mathbb{W}$. 
\item Under $\mathbb{Q}$, $\mathcal{L}_t\to\infty$  a.s. when $ t\to1,\;t < 1$. 
\end{itemize}
\noindent
By the second property, the set $\{ \mathcal{L}_t \underset{t \rightarrow 1, t<1}{\longrightarrow} \infty\}$
 has probability one under $\mathbb{Q}$. Since it is negligible under $\mathbb{P}$, it is essential 
to suppose that it is not contained in $\mathcal{F}_0$, if we need to have the first property: the filtration
must not be completed. 
The two properties above imply
\begin{align*}
\mathbb{Q} \left[ L_{1 - 2^{-n}}  \underset{n \rightarrow \infty}{\longrightarrow} \infty \right]
& \geq \mathbb{Q} \left[ \mathcal{L}_{1 - 2^{-n}}  \underset{n \rightarrow \infty}{\longrightarrow} \infty,
\, \forall n \in \mathbb{N}, \, L_{1-2^{-n}} = \mathcal{L}_{1 - 2^{-n}}  \right] \\ 
& \geq 1 - \sum_{n \in \mathbb{N}} \mathbb{Q} \left[L_{1-2^{-n}} \neq \mathcal{L}_{1 - 2^{-n}} \right] = 1. 
\end{align*}
\noindent
The last equality is due to the fact that for all $n \in \mathbb{N}$,
$$ \mathbb{W} \left[L_{1-2^{-n}} \neq \mathcal{L}_{1 - 2^{-n}} \right] = 0,$$
and then
$$\mathbb{Q}  \left[L_{1-2^{-n}} \neq \mathcal{L}_{1 - 2^{-n}} \right] = 0,$$
since $L_{1-2^{-n}}$ and $\mathcal{L}_{1-2^{-n}}$ are $\mathcal{F}_{1-2^{-n}}$-measurable 
and since the restriction of $\mathbb{Q}$ to this $\sigma$-algebra is absolutely continuous with respect to
$\mathbb{W}$. We have thus proved that there exist some paths such that $L_{1 - 2^{-n}}$ tends
to infinity with $n$, which contradicts the fact  that $(L_t)_{t \geq 0}$ is c\`adl\`ag.
From this we also deduce that in general there do not exist c\`adl\`ag versions for martingales. Similarly many other
important results from stochastic analysis cannot be proved
 on the most general filtered probability space, e.g. the existence of the Doob-Meyer decomposition 
for submartingales and the d\'ebut theorem (see for instance \cite{DM} and \cite{DM2}).
In order to avoid this technical problem, it is generally assumed that the filtered probability 
space on which the processes are constructed satisfies the usual conditions, i.e. the filtration 
is complete and right-continuous. \\

But now, if we wish to perform a change of probability measure (for example,
by using the Girsanov theorem), this assumption reveals to be too restrictive. Let us illustrate this fact by a simple example. Let us consider the filtered probability 
space $\big(\mathcal{C}(\mathbb{R}_+, \mathbb{R}), \widetilde{\mathcal{F}},
 (\widetilde{\mathcal{F}}_t)_{t \geq 0}, \widetilde{\mathbb{W}}\big)$
 obtained, from the Wiener space 
$\big(\mathcal{C}(\mathbb{R}_+, \mathbb{R}), \mathcal{F}, (\mathcal{F}_t)_{t \geq 0}, \mathbb{W}\big)$
described above, by taking its usual augmentation, i.e.:
\begin{itemize}
\item $\widetilde{\mathcal{F}}$ is the $\sigma$-algebra generated by $\mathcal{F}$ and its negligible sets.
\item For all $t \geq 0$, $\widetilde{\mathcal{F}}_t$ is $\sigma$-algebra generated by 
$\mathcal{F}_t$ and the negligible sets of $\mathcal{F}$.
\item $\widetilde{\mathbb{W}}$ is the unique possible extension of $\mathbb{W}$ to the completed
$\sigma$-algebra $\widetilde{\mathcal{F}}$.
\end{itemize}
\noindent
Let us also consider the family of probability measures $(\mathbb{Q}_t)_{t \geq 0}$, 
such that $\mathbb{Q}_t$ is defined on $\widetilde{\mathcal{F}}_t$ by
$$\mathbb{Q}_t = e^{X_t - \frac{t}{2} }. \, \widetilde{\mathbb{W}}_{ \, |\widetilde{\mathcal{F}_t} }.$$
This family of probability measures is coherent, i.e. for $0 \leq s \leq t$, the restriction of $\mathbb{Q}_t$ to 
$\widetilde{\mathcal{F}}_s$ is equal to $\mathbb{Q}_s$. However, unlike what one would expect,
 there does not exist a probability measure 
$\mathbb{Q}$ on $\widetilde{\mathcal{F}}$ such that its restriction to 
$\widetilde{\mathcal{F}}_s$ is equal to $\mathbb{Q}_s$ for all $s \geq 0$. Indeed, let us assume
that $\mathbb{Q}$ exists. 
The event
$$A := \{ \forall t \geq 0, X_t \geq -1 \} $$
satisfies $\widetilde{\mathbb{W}} [A] = 0$, 
and then $A \in \widetilde{\mathcal{F}}_0$ by completeness, which implies that $\mathbb{Q}[A] = 0$.
On the other hand, under $\mathbb{Q}_t$, for all $t \geq 0$, the process $(X_s)_{0 \leq s \leq t}$ is a Brownian motion
with drift 1, and hence under $\mathbb{Q}$. One
deduces that:
$$\mathbb{Q} [ \forall s \in [0,t], \, X_s \geq -1 ]
= \widetilde{\mathbb{W}} [\forall s \in [0,t], \, X_s \geq -s-1 ] 
\geq \widetilde{\mathbb{W}} [\forall s \geq 0, \, X_s \geq -s-1 ].$$
Consequently, by letting $t$ go to infinity, one obtains:
$$\mathbb{Q} [A] \geq \widetilde{\mathbb{W}} [\forall s \geq 0, \, X_s \geq -s-1 ] > 0,$$
which is a contradiction. Therefore, the usual conditions are not suitable  for the problem of extension of coherent 
probability measures.  In fact one can observe that the argument above does not depend on the completeness of $\widetilde{\mathcal{F}}$, but
only on the fact that $\widetilde{\mathcal{F}}_0$ contains all the sets in $\widetilde{\mathcal{F}}$ of 
probability zero. That is why it still remains available if we consider, with the notation above, the 
space $\big(\mathcal{C}(\mathbb{R}_+, \mathbb{R}), \mathcal{F}, (\mathcal{F}'_t)_{t \geq 0}, \mathbb{W}\big)$,
where for all $t \geq 0$, $\mathcal{F}'_t$ is the $\sigma$-algebra generated by $\mathcal{F}_t$ and 
the sets in $\mathcal{F}$ of probability zero. 
 
\subsection{The N-usual augmentation}
In order to make compatible the general results on the extension of probability measures problem and the existence of regular versions for stochastic processes, we propose an augmentation which is intermediate between right continuity and the usual augmentation, and we call it the \textit{N-augmentation}. As already mentioned, Bichteler has introduced this augmentation before us in his book \cite{Bi}, and has called it the \emph{natural augmentation}. But when we discovered this augmentation, we were not aware of Bichteler's work and this is reflected in the difference in our approaches. Hence the interested reader would benefit by looking at both \cite{Bi} and \cite{NN2}.
\begin{definition}[\cite{NN2}] \label{negligible}
Let $(\Omega, \mathcal{F}, (\mathcal{F}_t)_{t \geq 0}, \mathbb{P})$ be a filtered probability space.
A subset $A$ of $\Omega$ is N-negligible with respect to the space 
$(\Omega,\mathcal{F}, (\mathcal{F}_t)_{t \geq 0}, \mathbb{P})$, iff there exists a sequence $(B_n)_{n \geq 0}$
of subsets of $\Omega$, such that for all $n \geq 0$, $B_n \in \mathcal{F}_n$, $\mathbb{P} [B_n] = 0$, and
$$A \subset \bigcup_{n \geq 0} B_n.$$  
\end{definition}
\begin{remark}
The integers do not play a crucial r\^ole in Definition \ref{negligible}. If 
$(t_n)_{n \geq 0}$ is an unbounded sequence in $\mathbb{R}_+$, one
can replace the condition $B_n \in \mathcal{F}_n$ by the condition $B_n \in \mathcal{F}_{t_n}$.  
\end{remark}
\noindent
Let us now define a notion which is the analog of completeness for N-negligible sets. It 
is the main ingredient in the definition of what we shall call the N-usual conditions:  
\begin{definition}[\cite{NN2}] \label{complete}
A filtered probability space $(\Omega,\mathcal{F}, (\mathcal{F}_t)_{t \geq 0}, \mathbb{P})$,
  is N-complete iff all the N-negligible sets of this space are contained in $\mathcal{F}_0$. It
satisfies the N-usual conditions iff it is N-complete and the filtration 
$(\mathcal{F}_t)_{t \geq 0}$ is right-continuous.
\end{definition}
\noindent
It is natural to ask if from a given filtered probability 
space, one can define in a canonical way a space which satisfies the N-usual conditions and 
which is as "close" as possible to the initial space. The answer to this question is positive in the following
sense: 
\begin{proposition}[\cite{NN2}]
Let $(\Omega, \mathcal{F}, (\mathcal{F}_t)_{t \geq 0}, \mathbb{P})$ be a filtered probability space, and 
$\mathcal{N}$ the family of its N-negligible sets. 
Let $\widetilde{\mathcal{F}}$ be the $\sigma$-algebra generated by $\mathcal{N}$ and 
$\mathcal{F}$, and for all $t \geq 0$, 
 $\widetilde{\mathcal{F}}_t$  the $\sigma$-algebra generated by $\mathcal{N}$ and
$\mathcal{F}_{t+}$,
where $$ \mathcal{F}_{t+} := \bigcap_{u > t} \mathcal{F}_t.$$ Then there exists a unique probability measure 
$\widetilde{\mathbb{P}}$ on $(\Omega, \widetilde{\mathcal{F}})$ which coincides with 
$\mathbb{P}$ on $\mathcal{F}$, and the space $(\Omega, \widetilde{\mathcal{F}}, 
 (\widetilde{\mathcal{F}}_t)_{t \geq 0},
\widetilde{\mathbb{P}})$ satisfies the N-usual conditions. Moreover, if 
 $(\Omega, \mathcal{F}', (\mathcal{F}'_t)_{t \geq 0}, \mathbb{P}')$ is a filtered probability space
satisfying the N-usual conditions, such that $\mathcal{F}'$ contains $\mathcal{F}$,  $\mathcal{F}'_t$ contains
$\mathcal{F}_t$ for all $t \geq 0$, and if $\mathbb{P}'$ is an extension of $\mathbb{P}$, then 
$\mathcal{F}'$ contains $\widetilde{\mathcal{F}}$, $\mathcal{F}'$ contains $\widetilde{\mathcal{F}}_t$, 
for all $t \geq 0$ and $\mathbb{P}'$ is an extension of $\widetilde{\mathbb{P}}$.
In other words,  $(\Omega, \widetilde{\mathcal{F}}, 
 (\widetilde{\mathcal{F}}_t)_{t \geq 0},
\widetilde{\mathbb{P}})$ is the smallest extension of $(\Omega, \mathcal{F}, 
(\mathcal{F}_t)_{t \geq 0}, \mathbb{P})$ which satisfies the N-usual conditions:
we call it the N-augmentation of $(\Omega, \mathcal{F}, 
(\mathcal{F}_t)_{t \geq 0}, \mathbb{P})$ 
\end{proposition}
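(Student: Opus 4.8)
The plan is to establish the three assertions in turn --- existence and uniqueness of $\widetilde{\mathbb{P}}$, then the N-usual conditions for the augmented space, then its minimality --- with the whole argument resting on two preliminary facts about $\mathcal{N}$. First I would record that $\mathcal{N}$ is a $\sigma$-ideal: it is stable under subsets (immediate from the covering definition) and under countable unions (concatenate the covering sequences), and every $A \in \mathcal{N}$ is contained in a genuine $\mathbb{P}$-null set $B = \bigcup_n B_n \in \mathcal{F}$. Alongside this I would prove the structural lemma that for any sub-$\sigma$-algebra $\mathcal{G}$ of $\mathcal{F}$,
\[
\sigma(\mathcal{G} \cup \mathcal{N}) = \{H : H \triangle G \in \mathcal{N} \text{ for some } G \in \mathcal{G}\},
\]
the right-hand side being a $\sigma$-algebra precisely because $\mathcal{N}$ is a $\sigma$-ideal (closure under complement uses $H^c \triangle G^c = H \triangle G$; closure under countable unions uses $(\bigcup_k H_k)\triangle(\bigcup_k G_k) \subseteq \bigcup_k (H_k \triangle G_k)$).

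Given these, existence of $\widetilde{\mathbb{P}}$ follows by observing that every $A \in \mathcal{N}$ is $\overline{\mathbb{P}}$-null in the $\mathbb{P}$-completion $\overline{\mathcal{F}}$, so $\widetilde{\mathcal{F}} = \sigma(\mathcal{F} \cup \mathcal{N}) \subseteq \overline{\mathcal{F}}$, and one simply restricts the completed measure $\overline{\mathbb{P}}$ to $\widetilde{\mathcal{F}}$. Uniqueness follows because any extension must assign measure zero to every N-negligible set (each being dominated by a $\mathbb{P}$-null $\mathcal{F}$-set), so by the structural lemma its value on $H = G \triangle N$ is forced to be $\mathbb{P}[G]$. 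For N-completeness I would take an N-negligible set $A$ of the augmented space, cover it by $\widetilde{\mathcal{F}}_n$-sets $\widetilde{B}_n$ of $\widetilde{\mathbb{P}}$-measure zero, replace each by $C_n \in \mathcal{F}_{n+} \subseteq \mathcal{F}_{n+1}$ with $\widetilde{B}_n \triangle C_n \in \mathcal{N}$ and hence $\mathbb{P}[C_n] = 0$, and conclude $A \subseteq (\bigcup_n C_n) \cup \bigcup_n (\widetilde{B}_n \triangle C_n) \in \mathcal{N} \subseteq \widetilde{\mathcal{F}}_0$.

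The main obstacle is the right-continuity $\bigcap_{s > t} \widetilde{\mathcal{F}}_s = \widetilde{\mathcal{F}}_t$, since intersections of generated $\sigma$-algebras do not in general commute with the generation procedure. Here I would exploit the structural lemma once more: given $H \in \bigcap_{s > t} \widetilde{\mathcal{F}}_s$, choose $s_k \downarrow t$ and $G_{s_k} \in \mathcal{F}_{s_k+}$ with $H \triangle G_{s_k} \in \mathcal{N}$, and set $G := \limsup_k G_{s_k}$. Because the $\mathcal{F}_{s_k+}$ decrease to $\mathcal{F}_{t+}$ one gets $G \in \mathcal{F}_{t+}$, while the elementary inclusion $H \triangle \limsup_k G_{s_k} \subseteq \bigcup_k (H \triangle G_{s_k})$ keeps the symmetric difference N-negligible, so $H \in \widetilde{\mathcal{F}}_t$.

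Finally, minimality is the most direct part. For a competing space $(\Omega, \mathcal{F}', (\mathcal{F}'_t)_{t \geq 0}, \mathbb{P}')$ satisfying the N-usual conditions with the stated containments, the hypotheses force every member of $\mathcal{N}$ to be N-negligible there as well (the covering sets lie in $\mathcal{F}'_n$ and keep $\mathbb{P}'$-measure zero since $\mathbb{P}'$ extends $\mathbb{P}$), hence to belong to $\mathcal{F}'_0$ by N-completeness. Combined with $\mathcal{F}_{t+} \subseteq \mathcal{F}'_{t+} = \mathcal{F}'_t$ (using the right-continuity of $\mathcal{F}'$), this gives $\widetilde{\mathcal{F}}_t \subseteq \mathcal{F}'_t$ for every $t$ and $\widetilde{\mathcal{F}} \subseteq \mathcal{F}'$; and since $\mathbb{P}'|_{\widetilde{\mathcal{F}}}$ is a probability measure on $\widetilde{\mathcal{F}}$ agreeing with $\mathbb{P}$ on $\mathcal{F}$, the uniqueness already established identifies it with $\widetilde{\mathbb{P}}$, so $\mathbb{P}'$ extends $\widetilde{\mathbb{P}}$.
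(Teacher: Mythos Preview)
The paper does not actually prove this proposition: it is stated with a citation to \cite{NN2}, and the section opens by noting that ``the proofs which are not provided can be found there.'' So there is no proof in the present paper to compare against.

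That said, your argument is correct and complete. The two preliminary observations --- that $\mathcal{N}$ is a $\sigma$-ideal and that $\sigma(\mathcal{G}\cup\mathcal{N})=\{H: H\triangle G\in\mathcal{N}\text{ for some }G\in\mathcal{G}\}$ --- are exactly the right tools, and you deploy them cleanly for existence/uniqueness of $\widetilde{\mathbb P}$, N-completeness, and minimality. The step you flag as the main obstacle, right-continuity, is handled correctly: the $\limsup$ trick with $G=\limsup_k G_{s_k}\in\bigcap_m\mathcal{F}_{s_m+}=\mathcal{F}_{t+}$ together with the inclusion $H\triangle\limsup_k G_{s_k}\subseteq\bigcup_k(H\triangle G_{s_k})$ is the standard way to push the approximation through the intersection. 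One cosmetic point: when you write ``choose $s_k\downarrow t$'' you should take $s_k>t$ strictly so that $\bigcap_m\mathcal{F}_{s_m+}=\mathcal{F}_{t+}$ holds, but this is clearly what you intend. The minimality paragraph is fine as written.
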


Once the N-usual conditions are defined, it is natural to compare them with the usual conditions. 
One has the following result:
\begin{proposition}
Let $(\Omega, \mathcal{F}, (\mathcal{F}_t)_{t \geq 0}, \mathbb{P})$ be a filtered probability
space which satisfies the N-usual conditions. Then for all $t \geq 0$, the space 
$(\Omega, \mathcal{F}_t, (\mathcal{F}_s)_{0 \leq 
s \leq t}, \mathbb{P})$ satisfies the usual conditions. 
\end{proposition}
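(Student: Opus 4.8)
The plan is to verify separately the two components of the usual conditions for the truncated space $(\Omega, \mathcal{F}_t, (\mathcal{F}_s)_{0 \leq s \leq t}, \mathbb{P})$: right-continuity of the filtration $(\mathcal{F}_s)_{0 \leq s \leq t}$, and completeness, meaning that $\mathcal{F}_0$ (hence every $\mathcal{F}_s$) contains all the $\mathbb{P}$-negligible sets of the terminal $\sigma$-algebra $\mathcal{F}_t$.

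Right-continuity comes essentially for free. For $0 \leq s < t$, right-continuity of the original filtration under the N-usual conditions gives $\bigcap_{u > s} \mathcal{F}_u = \mathcal{F}_s$; restricting the intersection to $u \leq t$ changes nothing, since one may take $u$ arbitrarily close to $s$. Thus the truncated filtration is right-continuous on $[0,t)$, and there is no condition to check at the endpoint $s = t$. First I would record this observation.

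The heart of the argument is completeness, and here the key is to observe that any $\mathbb{P}$-null set of $\mathcal{F}_t$ is automatically N-negligible. Indeed, let $A \in \mathcal{F}_t$ with $\mathbb{P}[A] = 0$, and fix an integer $n \geq t$. Since the filtration is increasing, $A \in \mathcal{F}_t \subseteq \mathcal{F}_n$; setting $B_n = A$ and $B_m = \emptyset$ for $m \neq n$ produces a sequence with $B_m \in \mathcal{F}_m$, $\mathbb{P}[B_m] = 0$, and $A \subseteq \bigcup_{m \geq 0} B_m$. By Definition \ref{negligible}, $A$ is N-negligible. Because the space satisfies the N-usual conditions, it is in particular N-complete, so by Definition \ref{complete} we conclude $A \in \mathcal{F}_0$.

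Combining the two steps finishes the proof: every $\mathbb{P}$-null set of $\mathcal{F}_t$ lies in $\mathcal{F}_0 \subseteq \mathcal{F}_s$ for each $s \in [0,t]$, so the filtration $(\mathcal{F}_s)_{0 \leq s \leq t}$ is $\mathbb{P}$-complete, and it is right-continuous; hence the space satisfies the usual conditions. There is no genuine obstacle here. The only point requiring care is the elementary remark that null sets of a \emph{fixed} $\mathcal{F}_t$ already belong to some $\mathcal{F}_n$ and are therefore captured by the countable-union definition of N-negligibility. This is precisely the feature that distinguishes the N-usual conditions, which tolerate null sets appearing only ``at infinity'', from the full usual conditions on the infinite-horizon space.
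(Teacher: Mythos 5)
Your proof is correct and follows essentially the same route as the paper: right-continuity is immediate, and completeness follows because any null set of $\mathcal{F}_t$ is N-negligible (it sits inside some $\mathcal{F}_n$ with $n \geq t$) and hence lies in $\mathcal{F}_0$ by N-completeness. The only cosmetic difference is that the paper treats arbitrary negligible sets $A \subset B$ with $B \in \mathcal{F}_t$, $\mathbb{P}[B]=0$, rather than only measurable null sets, but your argument covers this verbatim since Definition \ref{negligible} only requires $A$ to be \emph{contained in} the union of the $B_n$.
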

\begin{proof}
The right-continuity of $(\mathcal{F}_s)_{0 \leq s \leq t}$ is obvious, let us prove the completeness. 
If $A$ is a negligible set of $(\Omega, \mathcal{F}_t, \mathbb{P})$, there exists 
$B \in \mathcal{F}_t$, such that 
$A \subset B$ and $\mathbb{P}[B]=0$. One deduces immediately that $A$ is N-negligible with respect 
to $(\Omega, \mathcal{F}, (\mathcal{F}_t)_{t \geq 0}, \mathbb{P})$, and by N-completeness
of this filtered probability space, $A \in \mathcal{F}_0$. 
\end{proof}
\noindent
This relation between the usual conditions and the N-usual conditions is the main ingredient to prove that one can replace the usual conditions
by the N-usual conditions in most of the classical results in stochastic calculus. For example, the following can be proved:
\begin{itemize}
\item If $(X_t)$ is a martingale, then it admits a c\`adl\`ag modification, which is unique 
up to indistinguishability. 
\item If $(\Omega, \mathcal{F}, (\mathcal{F}_t)_{t \geq 0}, \mathbb{P})$ is a filtered probability space
satisfying the N-usual conditions, and if $(X_t)_{t \geq 0}$ is an adapted process defined on this space such that 
 there exists a c\`adl\`ag version (resp. continuous version) $(Y_t)_{t \geq 0}$
 of $(X_t)_{t \geq 0}$, then there exists a c\`adl\`ag and adapted version (resp. continuous and adapted version) of 
$(X_t)_{t \geq 0}$, which is necessarily indistinguishable from $(Y_t)_{t \geq 0}$.
\item Let $(\Omega, 
\mathcal{F}, (\mathcal{F}_t)_{t \geq 0}, \mathbb{P})$ be a filtered probability space satisfying the
N-usual conditions, 
and let $A$ be a progressive subset of $\mathbb{R}_+ \times \Omega$. Then the d\'ebut of $A$, i.e. 
the random time $D(A)$ such that for all $\omega \in \Omega$:
$$D(A)(\omega) := \inf \{t \geq 0, (t,\omega) \in A \}$$
is an $(\mathcal{F}_t)_{t \geq 0}$-stopping time. 
\item Let $(X_t)_{t \geq 0}$ be a right-continuous submartingale defined on a filtered probability
space $(\Omega, \mathcal{F}, (\mathcal{F}_t)_{t \geq 0}, \mathbb{P})$, satisfying the N-usual conditions.
 We suppose that $(X_t)_{t \geq 0}$ is of class $(DL)$, i.e. for all $a \geq 0$, $(X_T)_{T \in \mathcal{T}_a}$
is uniformly integrable, where $\mathcal{T}_a$ is the family of the $(\mathcal{F}_t)_{t \geq 0}$-stopping
times which are bounded by $a$ (for example, every nonnegative submartingale is of class $(DL)$).
Then, there exist a right-continuous $(\mathcal{F}_t)_{t \geq 0}$-martingale $(M_t)_{t \geq 0}$ and 
an increasing process
 $(A_t)_{t \geq 0}$ starting at zero, such that: $$X_t = M_t + A_t$$ for all $t \geq 0$, and for every
bounded, right-continuous martingale $(\xi_s)_{s \geq 0}$,
$$\mathbb{E} \left[\xi_t A_t \right] =\mathbb{E} \left[ \int_{(0,t]} \xi_{s-} dA_s \right],$$
where $\xi_{s-}$ is the left-limit of $\xi$ at $s$, almost surely well-defined for all $s > 0$.
The processes $(M_t)_{t \geq 0}$ and $(A_t)_{t \geq 0}$ are uniquely determined, up to indistinguishability. 
Moreover, they can be chosen to be continuous if $(X_t)_{t \geq 0}$ is a continuous process. 
\item The section theorem holds in a filtered probability space satisfying the N-usual assumptions and hence optional projections are well-defined.
\end{itemize}
\subsection{Extension of measures and the N-usual augmentation}
We first state a well-known Parthasarathy  type condition for the probability measures extension problem.

\begin{definition} \label{P}
Let $(\Omega, \mathcal{F}, (\mathcal{F}_t)_{t \geq 0})$ be a filtered measurable space, such that
$\mathcal{F}$ is the $\sigma$-algebra generated by $\mathcal{F}_t$, 
$t \geq 0$: $\mathcal{F}=\bigvee_{t\geq0}\mathcal{F}_t$. We shall say that the
 property (P) holds if and only if $(\mathcal{F}_t)_{t \geq 0}$ 
enjoys the following conditions: 
\begin{itemize}
\item For all $t \geq 0$, $\mathcal{F}_t$ is generated by a countable number of sets;
\item For all $t \geq 0$, there exist a Polish space $\Omega_t$, and a surjective map 
 $\pi_t$ from $\Omega$ to $\Omega_t$, such that $\mathcal{F}_t$ is the $\sigma$-algebra of the inverse
 images, by $\pi_t$, of Borel sets in $\Omega_t$, and such that for all $B \in \mathcal{F}_t$, 
 $\omega \in \Omega$, $\pi_t (\omega) \in \pi_t(B)$ implies $\omega \in B$;
\item If $(\omega_n)_{n \geq 0}$ is a sequence of elements of $\Omega$, such that for all $N \geq 0$,
$$\bigcap_{n = 0}^{N} A_n (\omega_n) \neq \emptyset,$$
where $A_n (\omega_n)$ is the intersection of the sets in $\mathcal{F}_n$ containing $\omega_n$, 
then:
$$\bigcap_{n = 0}^{\infty} A_n (\omega_n) \neq \emptyset.$$
\end{itemize}
\end{definition}
\noindent
Given this technical definition, one can state the following result:
\begin{proposition}[\cite{Parth}, \cite{NN2}] \label{extension}
Let $(\Omega, \mathcal{F}, (\mathcal{F}_t)_{t \geq 0})$ be a filtered measurable space satisfying the 
property (P), and let, for $t \geq 0$, $\mathbb{Q}_t$ be a probability measure on $(\Omega, \mathcal{F}_t)$, 
such that for all $t \geq s \geq 0$, $\mathbb{Q}_s$ is the restriction of $\mathbb{Q}_t$ to $\mathcal{F}_s$.
Then, there exists a unique measure $\mathbb{Q}$ on $(\Omega, \mathcal{F})$ such that for all $t \geq 0$,
its restriction to $\mathcal{F}_t$ is equal to $\mathbb{Q}_t$. 
\end{proposition}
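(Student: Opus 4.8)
The plan is to realise $\mathbb{Q}$ as the Carath\'eodory extension of a finitely additive set function built from the $\mathbb{Q}_t$, the only real work being countable additivity, which is exactly where property (P) intervenes. By the remark following Definition \ref{negligible} I may replace the index set by the integers, so I set $\mathcal{A} := \bigcup_{n \geq 0} \mathcal{F}_n$. Since the filtration is increasing, $\mathcal{A}$ is an algebra generating $\mathcal{F} = \bigvee_n \mathcal{F}_n$. I would first define $\mathbb{Q}$ on $\mathcal{A}$ by $\mathbb{Q}(A) := \mathbb{Q}_n(A)$ whenever $A \in \mathcal{F}_n$; coherence makes this unambiguous, and finite additivity is immediate because any finite collection of sets in $\mathcal{A}$ lies in a common $\mathcal{F}_n$, where $\mathbb{Q}_n$ is a genuine measure. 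Uniqueness is the soft part: two probability measures on $\mathcal{F}$ that agree on the $\pi$-system $\mathcal{A}$ agree on $\sigma(\mathcal{A}) = \mathcal{F}$ by the monotone class (Dynkin) theorem.

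The substance is existence, for which, by Carath\'eodory's theorem, it suffices to prove that $\mathbb{Q}$ is continuous from above at $\emptyset$ on $\mathcal{A}$: for every decreasing sequence $(A_k)$ in $\mathcal{A}$ with $\mathbb{Q}(A_k) \geq \varepsilon > 0$ one must produce a point of $\bigcap_k A_k$. After reindexing I may assume $A_k \in \mathcal{F}_k$. Using clause (ii), write $A_k = \pi_k^{-1}(B_k)$ with $B_k \in \mathcal{B}(\Omega_k)$ and push $\mathbb{Q}_k$ forward to a Borel probability measure $\mu_k$ on the Polish space $\Omega_k$, so that $\mu_k(B_k) = \mathbb{Q}(A_k) \geq \varepsilon$. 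Since finite Borel measures on Polish spaces are tight, I can choose compact $K_k \subset B_k$ with $\mu_k(B_k \setminus K_k) < \varepsilon 2^{-k-1}$; the compact-based cylinders $C_k := \pi_k^{-1}(K_k) \subset A_k$ then satisfy $\mathbb{Q}(A_j \setminus C_j) < \varepsilon 2^{-j-1}$, and a routine estimate using $A_k \subset A_j$ gives $\mathbb{Q}\big(\bigcap_{j \leq k} C_j\big) \geq \varepsilon/2 > 0$. In particular each finite intersection $\bigcap_{j \leq k} C_j$ is nonempty, so there is a point $\omega^{(k)}$ with $\pi_j(\omega^{(k)}) \in K_j$ for all $j \leq k$.

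It remains to pass from nonempty finite intersections to a nonempty infinite intersection, and this is precisely what clause (iii) is designed to furnish. The separation condition in clause (ii) together with the countable generation in clause (i) identifies the atom $A_n(\omega)$ of $\mathcal{F}_n$ with the fibre $\pi_n^{-1}(\{\pi_n(\omega)\})$, and since $\mathcal{F}_n \subset \mathcal{F}_{n+1}$ these atoms decrease in $n$. Exploiting compactness of the $K_n$, I would run a diagonal extraction on the sequence $(\omega^{(k)})$ so that $\pi_n(\omega^{(k)}) \to x_n \in K_n$ for every $n$ along a common subsequence, and choose, by surjectivity of $\pi_n$, points $\omega_n$ with $\pi_n(\omega_n) = x_n$. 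Then $A_n(\omega_n) = \pi_n^{-1}(\{x_n\}) \subset C_n$, so any common point of the atoms lies in $\bigcap_n C_n \subset \bigcap_k A_k$; clause (iii) yields such a point once the finite intersections $\bigcap_{n \leq N} A_n(\omega_n)$ are seen to be nonempty.

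\emph{The main obstacle} is exactly this last verification: organising the diagonal argument so that the limiting fibres are mutually compatible and the hypothesis of clause (iii) genuinely holds. The difficulty is that the connecting maps between the $\Omega_n$ are only measurable a priori, so compatibility of the limit points $x_n$ cannot be read off by naive continuity; one must instead combine the tightness estimates, which keep the relevant mass on the compact sets $K_n$, with the finite-intersection structure carried by the single witnesses $\omega^{(k)}$, and it is the conjunction of all three clauses of (P), rather than any one alone, that closes this gap. Once continuity at $\emptyset$ is established, Carath\'eodory's theorem delivers the $\sigma$-additive extension $\mathbb{Q}$ on $\mathcal{F}$, and its restriction to each $\mathcal{F}_t$ coincides with $\mathbb{Q}_t$ by construction, completing the proof.
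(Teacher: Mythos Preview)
The paper does not actually prove this proposition: it is stated with attribution to \cite{Parth} and \cite{NN2} and is immediately followed by a corollary, so there is no in-paper argument to compare against. Your outline is the classical Parthasarathy strategy --- finitely additive set function on the algebra $\bigcup_n \mathcal{F}_n$, Carath\'eodory extension, continuity at $\emptyset$ via tightness on the Polish spaces $\Omega_n$, and a compactness/diagonal step feeding into clause~(iii) of property~(P) --- and that is exactly the architecture behind the cited references.

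That said, your write-up is explicitly a sketch rather than a proof: you yourself label as ``the main obstacle'' the verification that the finite intersections $\bigcap_{n\le N} A_n(\omega_n)$ are nonempty for your limit points, and you do not carry it out. The difficulty you flag is genuine. Having chosen $\omega_n$ with $\pi_n(\omega_n)=x_n:=\lim_i \pi_n(\omega^{(k_i)})$, the intersection $\bigcap_{n\le N}\pi_n^{-1}(\{x_n\})$ is nonempty only if some single $\omega^\ast$ satisfies $\pi_n(\omega^\ast)=x_n$ for every $n\le N$; but surjectivity of $\pi_N$ only gives $\omega^\ast$ with $\pi_N(\omega^\ast)=x_N$, and since the induced map $\Omega_N\to\Omega_n$ is merely measurable, there is no reason that $\pi_n(\omega^\ast)$ equals $x_n$. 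In other words, separate diagonal limits in each $\Omega_n$ do not automatically produce a coherent tower of atoms. Closing this gap requires an additional device --- for instance, first replacing the $C_j$ by the decreasing sets $D_k:=\bigcap_{j\le k}C_j\in\mathcal{F}_k$ and arranging the compactness argument so that the limit at level $N$ already determines the lower levels, or exploiting clause~(i) to work with a single countable generating family across all $n$ simultaneously. Until some such mechanism is supplied, the hypothesis of clause~(iii) has not been verified and the existence half of the proposition remains unproved.
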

\noindent
One can easily deduce the following corollary which is often used in practice:
\begin{corollary}
Let 	 $\Omega$ be $\mathcal{C}(\mathbb{R}_+,\mathbb{R}^d)$, the space of continuous functions from 
	$\mathbb{R}_+$ to $\mathbb{R}^d$, or $\mathcal{D}(\mathbb{R}_+,\mathbb{R}^d)$, the space of c\`adl\`ag functions
 from $\mathbb{R}_+$ 
	to $\mathbb{R}^d$ (for some $d \geq 1$). For $t \geq 0$, define $(\mathcal{F}_t)_{t \geq 0}$
as the natural filtration of the canonical process $Y$, 
and $\mathcal{F}=\bigvee_{t\geq0}\mathcal{F}_t$. Then  $(\Omega, \mathcal{F},(\mathcal{F}_t)_{t \geq 0})$
 satisfies property (P).

\end{corollary}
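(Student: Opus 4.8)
The plan is to verify the three conditions of Definition \ref{P} with the natural choices $\Omega_t := \mathcal{C}([0,t],\mathbb{R}^d)$ (resp. $\mathcal{D}([0,t],\mathbb{R}^d)$) and $\pi_t : \Omega \to \Omega_t$ the restriction map $\pi_t(\omega) = \omega|_{[0,t]}$. First, I would recall that $\mathcal{C}([0,t],\mathbb{R}^d)$ under the uniform norm, and $\mathcal{D}([0,t],\mathbb{R}^d)$ under the Skorokhod topology, are Polish spaces, and that in both cases the Borel $\sigma$-algebra coincides with the $\sigma$-algebra generated by the coordinate evaluations $e_s : f \mapsto f(s)$, $s \in [0,t]$. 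Since $e_s \circ \pi_t = Y_s$ for $s \leq t$, this yields $\pi_t^{-1}(\mathcal{B}(\Omega_t)) = \sigma(Y_s, 0 \leq s \leq t) = \mathcal{F}_t$, and $\pi_t$ is surjective because any path on $[0,t]$ extends to $\mathbb{R}_+$ by freezing it at the value $f(t)$ after time $t$.

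For the countable generation, I would use continuity (resp. right-continuity) of the canonical paths: the value $Y_s$ is the limit of $Y_q$ along rationals $q \downarrow s$ (together with $Y_t$ at the endpoint in the c\`adl\`ag case), so $\mathcal{F}_t = \sigma(Y_q : q \in (\mathbb{Q}\cap[0,t]) \cup \{t\})$, and each $\sigma(Y_q)$ is generated by the countable family $\{Y_q \in R\}$ with $R$ ranging over boxes with rational endpoints. For the second condition, the key point beyond the identification above is the saturation property: given $B \in \mathcal{F}_t$, write $B = \pi_t^{-1}(B')$ with $B' \in \mathcal{B}(\Omega_t)$; since $\pi_t$ is surjective, $\pi_t(B) = \pi_t(\pi_t^{-1}(B')) = B'$, so $\pi_t(\omega) \in \pi_t(B)$ forces $\pi_t(\omega) \in B'$, i.e. $\omega \in \pi_t^{-1}(B') = B$.

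The heart of the argument is the third condition, and the main preliminary step is to identify the atoms $A_n(\omega_n)$. I would show that $A_n(\omega_n) = \{\omega \in \Omega : \omega|_{[0,n]} = \omega_n|_{[0,n]}\}$: the inclusion $\subseteq$ follows by intersecting the countably many $\mathcal{F}_n$-sets $\{Y_q = \omega_n(q)\}$ (for $q \in (\mathbb{Q}\cap[0,n]) \cup \{n\}$, each containing $\omega_n$) and invoking path regularity, while $\supseteq$ holds because every $\mathcal{F}_n$-set is of the form $\pi_n^{-1}(B')$ and hence depends only on the restriction to $[0,n]$. Granting this, the hypothesis $\bigcap_{n=0}^N A_n(\omega_n) \neq \emptyset$ for every $N$ is exactly a compatibility statement: it forces $\omega_m|_{[0,m]} = \omega_n|_{[0,m]}$ whenever $m \leq n$. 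One then defines a single path $\omega$ on $\mathbb{R}_+$ by $\omega(s) := \omega_n(s)$ for any $n \geq s$, which is well defined by this compatibility; since $\omega$ agrees with the continuous (resp. c\`adl\`ag) path $\omega_n$ on each $[0,n]$, it is itself continuous (resp. c\`adl\`ag), so $\omega \in \Omega$, and by construction $\omega \in A_n(\omega_n)$ for every $n$, giving $\bigcap_{n\geq 0} A_n(\omega_n) \neq \emptyset$.

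The only genuinely delicate points I expect are, first, the identification of $\mathcal{B}(\Omega_t)$ with the coordinate $\sigma$-algebra and the Polishness in the c\`adl\`ag case, where one must quote the standard facts about the Skorokhod space rather than reprove them, and second, the precise description of the atoms $A_n(\omega_n)$, which is where continuity/right-continuity of the paths is essential and from which the gluing in the third condition draws all its strength. Once the atoms are pinned down, the remaining verification is a routine consistency-and-gluing argument.
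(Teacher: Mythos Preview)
Your proposal is correct and follows essentially the same route as the paper: the same choice of $\Omega_t$ and restriction map $\pi_t$, the same appeal to Polishness of the Skorokhod space and the identification of its Borel $\sigma$-algebra with the coordinate one, and the same gluing argument for the third condition after identifying $A_n(\omega_n)$ as the set of paths agreeing with $\omega_n$ on $[0,n]$. If anything, you are more explicit than the paper on points it leaves implicit (surjectivity of $\pi_t$, the saturation property, and the two-sided inclusion for the atom identification).
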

\begin{proof}
  Let us prove
this result for c\`adl\`ag functions (for continuous functions, the result is similar and
  proved in \cite{SV}). For all $t \geq 0$, $\mathcal{F}_t$ is 
generated by the variables $Y_{rt}$, for $r$, rational, in $[0,1]$, hence, it is countably generated.
For the second property, one can take for $\Omega_t$, the set of c\`adl\`ag functions
from $[0,t]$ to $\mathbb{R}^d$, and for $\pi_t$, the restriction to the interval $[0,t]$.
The space $\Omega_t$ is Polish if one endows it with the Skorokhod metric. Moreover, 
its Borel $\sigma$-algebra is equal to the $\sigma$-algebra generated by the coordinates, a result 
from which one easily  deduces the properties of $\pi_t$ which need to be satisfied. The 
third property is easy to check: let us suppose that $(\omega_n)_{n \geq 0}$ is a sequence
 of elements of $\Omega$, such that for all $N \geq 0$,
$$\bigcap_{n = 0}^{N} A_n (\omega_n) \neq \emptyset,$$
where $A_n (\omega_n)$ is the intersection of the sets in $\mathcal{F}_n$ containing $\omega_n$.
Here,  $A_n (\omega_n)$ is the set of functions $\omega'$ which coincide with 
$\omega_n$ on $[0,n]$. Moreover, for $n \leq n'$, integers, the intersection
of $A_n (\omega_n)$ and $A_{n'} (\omega_{n'})$ is not empty, and then $\omega_n$ and $\omega_{n'}$ coincide 
on $[0,n]$. Therefore, there exists a c\`adl\`ag function $\omega$ which coincides
with $\omega_n$ on $[0,n]$, for all $n$, which implies:
$$\bigcap_{n = 0}^{\infty} A_n (\omega_n) \neq \emptyset.$$
\end{proof}
\begin{remark}
It is easily seen that the conditions of Proposition \ref{extension} are not satisfied
 by the space $\mathcal{C}([0,1],\mathbb{R})$ endowed with the filtration $(\mathcal{F}_t)_{t \geq 0}$, 
where $\mathcal{F}_t$ is the $\sigma$-algebra generated by the canonical process up to time $t/(1+t)$.
An explicit counter example is provided in \cite{FI}.
\end{remark}
The next proposition shows how condition (P) combines with the N-usual augmentation:
\begin{proposition}[\cite{NN2}]
Let $(\Omega, \mathcal{F}, (\mathcal{F}_t)_{t \geq 0}, \mathbb{P})$ be the N-augmentation of 
a filtered probability space satisfying the property (P). Then if $(\mathbb{Q}_t)_{t \geq 0}$ 
is a coherent family of probability measures, $\mathbb{Q}_t$ defined on $\mathcal{F}_t$, and absolutely 
continuous with respect to the restriction of $\mathbb{P}$ to $\mathcal{F}_t$, there exists
a unique probability measure $\mathbb{Q}$ on $\mathcal{F}$ which coincides with $\mathbb{Q}_t$ on 
$\mathcal{F}_t$, for all $t \geq 0$. 
\end{proposition}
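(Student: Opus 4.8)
The plan is to push the problem down to the un-augmented space, where Proposition~\ref{extension} applies, solve the extension there, and then lift the result through the N-augmentation using the absolute continuity hypothesis. Write $(\Omega, \mathcal{F}^{\circ}, (\mathcal{F}^{\circ}_t)_{t \geq 0}, \mathbb{P})$ for the filtered space satisfying property~(P) whose N-augmentation is $(\Omega, \mathcal{F}, (\mathcal{F}_t)_{t \geq 0}, \mathbb{P})$, and let $\mathcal{N}$ be the family of N-negligible sets, so that $\mathcal{F}_t = \sigma(\mathcal{N}, \mathcal{F}^{\circ}_{t+})$, $\mathcal{F} = \sigma(\mathcal{N}, \mathcal{F}^{\circ})$, and $\bigvee_t \mathcal{F}^{\circ}_{t+} = \bigvee_t \mathcal{F}^{\circ}_t = \mathcal{F}^{\circ}$. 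First I would restrict each $\mathbb{Q}_t$ to the original $\sigma$-algebra, setting $\mathbb{Q}^{\circ}_t := \mathbb{Q}_t|_{\mathcal{F}^{\circ}_t}$; since $\mathcal{F}^{\circ}_s \subset \mathcal{F}_s$ for all $s$, coherence of $(\mathbb{Q}_t)$ at once yields that $(\mathbb{Q}^{\circ}_t)$ is a coherent family on $(\mathcal{F}^{\circ}_t)$. As $(\Omega, \mathcal{F}^{\circ}, (\mathcal{F}^{\circ}_t))$ satisfies property~(P), Proposition~\ref{extension} provides a unique probability measure $\mathbb{Q}^{\circ}$ on $\mathcal{F}^{\circ}$ restricting to $\mathbb{Q}^{\circ}_t$ on each $\mathcal{F}^{\circ}_t$.

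The second step extends $\mathbb{Q}^{\circ}$ across the augmentation, and here the absolute continuity is decisive. Any $A \in \mathcal{N}$ is covered by some $\bigcup_n B_n$ with $B_n \in \mathcal{F}^{\circ}_n$ and $\mathbb{P}[B_n] = 0$; because $\mathbb{Q}^{\circ}[B_n] = \mathbb{Q}_n[B_n]$ and $\mathbb{Q}_n \ll \mathbb{P}|_{\mathcal{F}_n}$, each $B_n$ is $\mathbb{Q}^{\circ}$-null, so every N-negligible set has $\mathbb{Q}^{\circ}$-outer measure zero. Hence $\mathbb{Q}^{\circ}$ extends to $\mathcal{F} = \sigma(\mathcal{N}, \mathcal{F}^{\circ})$ by declaring N-negligible sets null: every $A \in \mathcal{F}$ may be written $A = G \triangle M$ with $G \in \mathcal{F}^{\circ}$ and $M \in \mathcal{N}$, and one sets $\mathbb{Q}[A] := \mathbb{Q}^{\circ}[G]$. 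That the sets of this form exhaust $\mathcal{F}$ and that $\mathbb{Q}$ is well defined are routine $\sigma$-algebra verifications (well-definedness uses that $G_1 \triangle G_2$ is $\mathbb{Q}^{\circ}$-null whenever $G_1 \triangle M_1 = G_2 \triangle M_2$).

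It remains to check that $\mathbb{Q}$ restricts to $\mathbb{Q}_t$ on $\mathcal{F}_t$ and that it is unique. Using the analogous description $\mathcal{F}_t = \{ G \triangle M : G \in \mathcal{F}^{\circ}_{t+}, \, M \in \mathcal{N} \}$, it suffices to show that $\mathbb{Q}$ and $\mathbb{Q}_t$ agree on $\mathcal{F}^{\circ}_{t+}$ and both vanish on $\mathcal{N}$; the latter holds since $\mathcal{N} \subset \mathcal{F}_0$ consists of $\mathbb{P}$-null sets and $\mathbb{Q}_t \ll \mathbb{P}|_{\mathcal{F}_t}$. For the former, fix any $s > t$: then $\mathcal{F}^{\circ}_{t+} \subset \mathcal{F}^{\circ}_s$ gives $\mathbb{Q}|_{\mathcal{F}^{\circ}_{t+}} = \mathbb{Q}_s|_{\mathcal{F}^{\circ}_{t+}}$, while $\mathcal{F}^{\circ}_{t+} \subset \mathcal{F}_t$ together with the coherence relation $\mathbb{Q}_s|_{\mathcal{F}_t} = \mathbb{Q}_t$ gives $\mathbb{Q}_s|_{\mathcal{F}^{\circ}_{t+}} = \mathbb{Q}_t|_{\mathcal{F}^{\circ}_{t+}}$. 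Uniqueness is then immediate: any extension must equal $\mathbb{Q}_t$ on each $\mathcal{F}_t$, and $\bigcup_t \mathcal{F}_t$ is an algebra generating $\mathcal{F}$, so two extensions coincide by the monotone class theorem.

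The crux of the argument, and the reason the proof must take this detour, is that the N-augmented space $(\Omega, \mathcal{F}, (\mathcal{F}_t))$ itself normally fails property~(P): adjoining every N-negligible set destroys both the countable generation and the nesting condition of Definition~\ref{P}, so Proposition~\ref{extension} cannot be invoked for it directly. The absolute continuity hypothesis is exactly the leverage that lets the measure constructed on $\mathcal{F}^{\circ}$ survive the passage to $\mathcal{F}$; dropping it, an N-negligible set could carry positive $\mathbb{Q}_t$-mass and no extension need exist at all.
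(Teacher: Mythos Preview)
Your argument is correct and follows the natural strategy: descend to the un-augmented space where property~(P) applies, invoke Proposition~\ref{extension} there, and then lift the resulting measure through the N-augmentation by observing that absolute continuity forces every N-negligible set to be $\mathbb{Q}^{\circ}$-null. The paper itself does not reproduce a proof of this proposition (it only cites \cite{NN2}), but the approach you outline is precisely the one taken in that reference, and your handling of the two delicate points---the description of $\mathcal{F}_t$ as $\{G\triangle M: G\in\mathcal{F}^{\circ}_{t+},\,M\in\mathcal{N}\}$ and the passage through $\mathcal{F}^{\circ}_{t+}$ via coherence at some $s>t$---is accurate.
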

\section{A universal $\sigma$-finite measure $\mathcal{Q}$} \label{sec::3}
\subsection{The class $(\Sigma)$}
We now have all the ingredients to rigorously answer Problem 1 raised in the Introduction. For this we will first  need to introduce a special class of local submartingales which was first introduced by Yor \cite{Y} and further studied by Nikeghbali \cite{N} and Cheridito, Nikeghbali and Platen \cite{CNP}.

\begin{definition} 
Let $(\Omega,\mathcal{F},(\mathcal{F}_t)_{t \geq 0},\mathbb{P})$ be a filtered probability space.
 A nonnegative (local) submartingale $(X_t)_{t \geq 0}$ is of class $(\Sigma)$, if it can
 be decomposed as
$X_t = N_t + A_t$ where $(N_t)_{t \geq 0}$ and $(A_t)_{t \geq 0}$ are $(\mathcal{F}_t)_{t \geq 0}$-adapted 
processes satisfying the following assumptions:
\begin{itemize}
\item $(N_t)_{t \geq 0}$ is a c\`adl\`ag (local) martingale.
\item $(A_t)_{t \geq 0}$ is a continuous increasing process, with $A_0 = 0$.
\item The measure $(dA_t)$ is carried by the set $\{t \geq 0, X_t = 0 \}$.
\end{itemize}
We shall say that $(X_t)_{t\geq0}$ is of class $(\Sigma D)$ if $X$ is of class $(\Sigma)$ and of class $(D)$.
\end{definition}

The class $(\Sigma)$ contains many well-known examples of stochastic processes (see e.g. \cite{N}) such as nonnegative local martingales, $|M_t|$, $M_t^+$, $M_t^-$ if $M$ is a continuous local martingale, the drawdown process $S_t-M_t$ where $M$ is a local martingale with only negative jumps and $S_t=\sup_{u\leq t}M_u$, the relative drawdown process $1-\dfrac{M_t}{S_t}$ if $M_0\neq0$, the age process of the standard Brownian motion $W_t$ in the filtration of the zeros of the Brownian motion, namely $\sqrt{t-g_t}$, where $g_t=\sup\{u\leq t: W_u=0\}$, etc. 
Moreover one notes that if $X$ is of $(\Sigma)$, then $X_t+M_t$ is also of class $(\Sigma)$ for any strictly positive c\`adl\`ag local martingale $M$.
Another key property of the class $(\Sigma)$ is the following stability result which follows from an application of Ito's formula and a monotone class argument. 
\begin{proposition}[\cite{N}] \label{lemmatrans}
Let $(X_t)_{t \geq 0}$ be of class $(\Sigma)$  and let $f : \mathbb{R} \to \mathbb{R}$ be
a locally bounded Borel function. Let us further assume that $(\Omega,\mathcal{F},(\mathcal{F}_t)_{t \geq 0},\mathbb{P})$ satisfies the usual assumptions or is N-complete. Denote $F(x) = \int_0^x f(y) dy$.
Then the process $(f(A_t) X_t)_{t \geq 0}$ is again of class $(\Sigma)$ with decomposition:
\begin{equation} \label{decomp}
f(A_t)X_t = f(0) X_0 + \int_0^t f(A_u) dN_u + F(A_t).
\end{equation}
\end{proposition}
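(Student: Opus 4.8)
The plan is to prove the decomposition (\ref{decomp}) in two stages: first for smooth $f$ via It\^o's formula, then extend to locally bounded Borel $f$ by a monotone class argument. The key structural fact I will exploit throughout is that the increasing process $(A_t)$ is continuous and that $(dA_t)$ is carried by the zero set $\{t : X_t = 0\}$. This last property is what makes the formula work: whenever $A_u$ increases, we have $X_u = 0$, so any term of the form (something)$\cdot X_u \, dA_u$ vanishes identically.

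\emph{First} I would establish the formula for $f \in C^1$. Write $X_t = N_t + A_t$ with $N$ a c\`adl\`ag local martingale and $A$ continuous increasing. Since $A$ is of finite variation and continuous, I would apply the integration-by-parts / It\^o formula to the product $f(A_t) X_t$. Because $A$ is continuous and of finite variation, the bracket $[f(A), X]$ reduces to the contribution from the continuous finite-variation part, giving
\begin{equation*}
f(A_t)X_t = f(A_0)X_0 + \int_0^t f(A_u)\,dX_u + \int_0^t X_u\, f'(A_u)\,dA_u.
\end{equation*}
Now I substitute $dX_u = dN_u + dA_u$ into the middle integral and, crucially, invoke the support property of $(dA_u)$: in both the integral $\int_0^t f(A_u)\,dA_u$ and the integral $\int_0^t X_u f'(A_u)\,dA_u$, the integrand may be evaluated on the set $\{X_u = 0\}$. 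Thus $\int_0^t X_u f'(A_u)\,dA_u = 0$, while $\int_0^t f(A_u)\,dA_u = \int_0^t f(A_u)\,dA_u = F(A_t) - F(A_0)$ by the change-of-variables formula for the continuous increasing process $A$ (recalling $F(x) = \int_0^x f(y)\,dy$ so $F(A_0) = F(0) = 0$ since $A_0 = 0$). Collecting terms yields exactly (\ref{decomp}), with $\int_0^t f(A_u)\,dN_u$ being a local martingale and $F(A_t)$ continuous increasing (increasing because $f(A_u)\,dA_u$ is nonnegative when $f \geq 0$; for general locally bounded $f$ one checks $F(A_t)$ is of finite variation and carried by $\{X=0\}$ in the sign-definite pieces). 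To confirm membership in $(\Sigma)$ I would verify the three defining properties for the triple $\big(f(0)X_0,\ \int_0^\cdot f(A_u)\,dN_u,\ F(A_\cdot)\big)$: the stochastic integral is a c\`adl\`ag local martingale, $F(A_\cdot)$ is continuous with $F(A_0)=0$, and $(dF(A_t)) = f(A_t)\,dA_t$ is carried by $\{t : X_t = 0\} \supseteq \{t : f(A_t)X_t = 0\}^c{}^c$, i.e.\ it inherits the support from $(dA_t)$.

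\emph{Second}, to pass from $C^1$ to locally bounded Borel $f$, I would run a monotone class / functional monotone class argument. Let $\mathcal{H}$ be the set of locally bounded Borel $f$ for which (\ref{decomp}) holds (in the sense that $f(A_\cdot)X_\cdot$ is of class $(\Sigma)$ with the stated decomposition). I have shown $C^1 \subset \mathcal{H}$; the set $\mathcal{H}$ is a vector space, is stable under bounded pointwise convergence of sequences uniformly bounded on compacts (here I use dominated convergence for the stochastic integral $\int_0^t f_n(A_u)\,dN_u \to \int_0^t f(A_u)\,dN_u$ in the appropriate localized $L^2$ or u.c.p.\ sense, and for the ordinary integral $F_n(A_t)\to F(A_t)$), and contains an algebra generating the Borel $\sigma$-algebra. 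The functional monotone class theorem then gives all bounded Borel $f$, and localization (stopping when $A_t$ or $|X_t|$ exceeds level $k$) upgrades this to locally bounded $f$.

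\emph{The main obstacle} I anticipate is the justification of the limiting step for the stochastic integral under only the hypothesis that the space satisfies the usual assumptions \emph{or} is N-complete, rather than the full usual conditions. The stochastic integral $\int_0^\cdot f(A_u)\,dN_u$ against a general c\`adl\`ag local martingale, and the convergence $\int_0^\cdot f_n(A_u)\,dN_u \to \int_0^\cdot f(A_u)\,dN_u$, are classically developed under the usual conditions; here I must check that the construction and the dominated-convergence theorem for stochastic integrals remain valid under N-completeness. This is precisely where the results collected earlier in the excerpt do the work: by the proposition asserting that N-completeness gives the usual conditions on each finite horizon $(\Omega,\mathcal{F}_t,(\mathcal{F}_s)_{s\le t},\mathbb{P})$, together with the stated transfer of c\`adl\`ag modifications, the d\'ebut theorem, and the Doob--Meyer decomposition to the N-usual setting, the entire It\^o-integration apparatus carries over, and the limiting argument goes through without change. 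So the obstacle is not a genuine new difficulty but a bookkeeping verification that every classical tool invoked (existence of the integral, its u.c.p.\ continuity in the integrand) has been certified to survive under N-completeness by the earlier results.
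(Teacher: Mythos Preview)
Your approach is correct and matches the paper's own indication exactly: the paper does not give a detailed proof but simply states that the result ``follows from an application of It\^o's formula and a monotone class argument,'' which is precisely your two-stage plan. One small point worth making explicit in your It\^o step is that the integration-by-parts formula produces $\int_0^t X_{u-}\,f'(A_u)\,dA_u$ with a left limit, and you should note that since $A$ is continuous the measure $dA$ has no atoms, so the countable set of jump times of $X$ is $dA$-null and $X_{u-}=X_u$ holds $dA$-almost everywhere; this is what lets you invoke the support property $\{X_u=0\}$ to kill that term.
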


\subsection{A special case related to financial modeling}\label{sec::3.1}
Now we state a theorem which gives sufficient conditions under which a process of the class $(\Sigma)$ which
 converges to $X_\infty$ a.s. satisfies (\ref{masterequation2}). This result is an extension of a result by Az\'ema-Yor \cite{AY1} and Az\'ema-Meyer-Yor \cite{AMY}. Indeed, in the case when $X$ is of class $(\Sigma D)$, one could deduce it from part 1 of Theorem 8.1 in  \cite{AMY}. The proof is very simple in this case and we give it.
\begin{theorem}[\cite{CNP}] \label{thmrepL}
Assume that $(\Omega,\mathcal{F},(\mathcal{F}_t)_{t \geq 0},\mathbb{P})$ satisfies the usual assumptions or is N-complete. Let  $(X_t)_{t \geq 0}$ be a process of class $(\Sigma)$ such that $\lim_{t\to\infty}X_t=X_\infty$ exists
 a.s. and is finite (in particular $N_\infty$ and $A_\infty$ exist and are a.s. finite). Let	$$
	g := \sup \{t : X_t = 0\} \quad \mbox{with the convention }
	\sup \emptyset = 0.
	$$
\begin{enumerate}
	\item If $(X_t)_{t \geq 0}$ is of class $(D)$, then
	\begin{equation} \label{repL}
	X_T = \mathbb{E}[X_{\infty} 1_{\{g \leq T\}} \mid \mathcal{F}_T] \quad
	\mbox{for every stopping time } T.
\end{equation}
\item More generally, if there exists a strictly
 positive Borel function $f$
 such that $(f(A_t)X_t)_{t \geq 0}$ is of class $(D)$, then (\ref{repL}) holds.
	\item If $(N_t^+)_{t \geq 0}$ is of class $(D)$, then 
	(\ref{repL}) holds.
\end{enumerate}
 \end{theorem}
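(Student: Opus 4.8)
The plan is to establish the representation first in case (1) and then reduce (2) and (3) to it. Throughout I use the decomposition $X_t = N_t + A_t$ of the class $(\Sigma)$. Since $X$ is nonnegative, converges, and is of class $(D)$, its martingale part $N$ is a uniformly integrable martingale converging to $N_\infty$, while $A$ increases to a finite limit $A_\infty$; moreover $A_\infty = A_T$ on $\{g \leq T\}$, because $X>0$ on $(g,\infty)$ and $(dA_t)$ is carried by $\{X=0\}$, so $A$ is constant on $[g,\infty)$. The first step is to exploit the stability result of Proposition \ref{lemmatrans} with $f = \mathds{1}_{[0,a]}$, so that $F(x) = x \wedge a$. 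This gives $\mathds{1}_{A_t \leq a} X_t = X_{t \wedge \sigma_a}$, where $\sigma_a := \inf\{t : A_t > a\}$, since $\int_0^t \mathds{1}_{A_u \leq a}\, dN_u = N_{t \wedge \sigma_a} - N_0$ and $A_t \wedge a = A_{t \wedge \sigma_a}$. Letting $t \to \infty$ and using that $X$ is c\`adl\`ag with limit $X_\infty$, I obtain
\[ X_{\sigma_a} = X_\infty\, \mathds{1}_{A_\infty \leq a}, \qquad A_{\sigma_a} = a \wedge A_\infty, \]
for each fixed $a$, hence (by right-continuity in $a$) simultaneously for all $a \geq 0$ almost surely. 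In particular $X_{\sigma_a}=0$ on $\{\sigma_a<\infty\}=\{a<A_\infty\}$, the analytic counterpart of the fact that $\sigma_a$ lands on a zero of $X$.

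The heart of the argument is then to substitute the $\mathcal{F}_T$-measurable level $a = A_T$ and set $\rho := \sigma_{A_T} \geq T$, which is a stopping time because the d\'ebut theorem is available under the N-usual conditions. Since $A_\rho = A_T \wedge A_\infty = A_T$, the identity above reads $X_\rho = N_\rho + A_T = X_\infty\, \mathds{1}_{A_\infty = A_T}$. Optional sampling of the uniformly integrable martingale $N$ between $T$ and $\rho$ gives $\mathbb{E}[N_\rho \mid \mathcal{F}_T] = N_T$, whence
\[ \mathbb{E}[X_\infty \mathds{1}_{A_\infty = A_T} \mid \mathcal{F}_T] = \mathbb{E}[X_\rho \mid \mathcal{F}_T] = N_T + A_T = X_T. \]

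It then remains to replace $\{A_\infty = A_T\}$ by $\{g \leq T\}$. Since $\{g \leq T\} \subseteq \{A_\infty = A_T\}$, I only have to show that $X_\infty$ vanishes on the difference $E := \{A_\infty = A_T,\ g > T\}$; this is the one place where I expect a genuine subtlety, because $g$ is not a stopping time and could in principle exceed the last time $A$ increases. I would resolve it by a nonnegative-martingale absorption argument: the martingale $Y_t := N_{t \wedge \rho} + A_T$ (for $t \geq T$) coincides with $X_{t \wedge \rho}$, since $A \equiv A_T$ on $[T,\rho]$, and is therefore globally nonnegative. On $\{\rho = \infty\} = \{A_\infty = A_T\}$ it equals $X$ on $[T,\infty)$, so on $E$ it hits $0$ after $T$; being a nonnegative martingale it is then absorbed at $0$, forcing $Y_\infty = X_\infty = 0$ there. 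Hence $X_\infty \mathds{1}_E = 0$, the two conditional expectations agree, and (1) follows.

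Finally, (2) follows by applying (1) to $Y_t := f(A_t) X_t$, which is again of class $(\Sigma)$ by Proposition \ref{lemmatrans}, is of class $(D)$ by hypothesis, and has the same zero set, hence the same $g$, as $X$ because $f>0$; dividing the resulting identity by the $\mathcal{F}_T$-measurable factor $f(A_T) = f(A_\infty)$ (equal on $\{g \leq T\}$) yields (\ref{repL}). For (3) I would use the integrability of $N^+$ to select a strictly positive decreasing $f$ for which $f(A_t)X_t$ is of class $(D)$, reducing it to (2); the work there is integrability bookkeeping rather than a new structural idea. The steps I expect to be most delicate are the passage from $\{A_\infty = A_T\}$ to $\{g \leq T\}$ and the verification that $\sigma_{A_T}$ is a bona fide stopping time, both of which rest on working under the N-usual conditions.
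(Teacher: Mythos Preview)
Your argument is correct, but for part (1) you take a noticeably longer road than the paper does. Instead of working with $\rho=\sigma_{A_T}=\inf\{t:A_t>A_T\}$, the paper goes straight to the stopping time
\[
d_T:=\inf\{t>T:X_t=0\}\quad(\inf\emptyset=\infty),
\]
which is the first zero of $X$ after $T$. The point is that $\{d_T=\infty\}=\{g\le T\}$ \emph{by definition}, and $X_{d_T}=0$ on $\{d_T<\infty\}$ by right-continuity, so the key identity $X_{d_T}=X_\infty\mathds{1}_{\{g\le T\}}$ is immediate; together with $A_{d_T}=A_T$ (no increase of $A$ on $(T,d_T)$ since $X>0$ there) and optional stopping of the uniformly integrable martingale $N$, this gives the result in one line. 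Your route via $\rho$ lands on the larger event $\{A_\infty=A_T\}$ rather than $\{g\le T\}$, which forces you into the additional nonnegative-supermartingale absorption argument to kill $X_\infty$ on the difference; that step is valid, but it is exactly what the choice of $d_T$ avoids. In short, the two stopping times differ precisely on those excursions where $X$ returns to zero without $A$ increasing, and the paper's choice sidesteps the issue entirely.

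For parts (2) and (3) your reductions match the paper's. In (2) the paper adds the small remark that one may shrink $f$ to make it locally bounded (so that Proposition~\ref{lemmatrans} applies), and in (3) it makes your ``select a strictly positive decreasing $f$'' concrete by taking $f(x)=e^{-x}$, using the elementary bound $0\le e^{-A_t}X_t\le N_t^{+}+\sup_{x\ge 0}xe^{-x}$ to get class $(D)$ from class $(D)$ of $N^{+}$.
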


\begin{proof}
(1) For a given stopping time $T$, denote
$$
d_T = \inf \{t > T : X_t = 0\} \quad \mbox{with the convention } \inf \emptyset = \infty.
$$
One checks that $d_T$ is a stopping time. Since $X_{\infty} 1_{\{g \le T\}} = X_{d_T}$ and 
$A_T = A_{d_T}$, it follows from Doob's optional stopping theorem that
$$
\mathbb{E}[ X_{\infty} 1_{\{g \le T\}} \mid {\mathcal F}_T]
= \mathbb{E}[N_{d_T} + A_{d_T} \mid {\mathcal F}_T]
= \mathbb{E}[N_{d_T} + A_T \mid {\mathcal F}_T]
= N_T + A_T = X_T.
$$

(2) Assume that there exists a strictly positive Borel function such that
 $(f(A_t)X_t)_{t \geq 0}$ is of
 class $(D)$. This property is preserved if one replaces $f$ by a smaller strictly positive Borel function, 
hence, one can suppose that 
$f$ is locally bounded. Then $(f(A_t)X_t)_{t \geq 0}$ is of class $(\Sigma D)$, and from part (1) of the theorem, we have:
$$f(A_T)X_T=\mathbb{E}[f(A_\infty)X_{\infty} 1_{\{g \leq T\}} \mid \mathcal{F}_T].$$ But 
on the set $\{g \leq T\}$, we have $A_\infty=A_T$, and consequently 
$$f(A_T)X_T=f(A_T)\mathbb{E}[X_{\infty} 1_{\{g \leq T\}} \mid \mathcal{F}_T].$$ The result
 follows by dividing both sides by $f(A_T)$ which is strictly positive. 

(3) Since $X\geq0$ and since $(N_t^+)_{t \geq 0}$ is of class $(D)$, we note that $(\exp(-A_t)X_t)_{t \geq 0}$ is of
 class $(D)$ and the result follows from (2).
\end{proof}
The above theorem is used in \cite{CNP} for financial applications. For example, if $(M_t)_{t \geq 0}$ is
 a nonnegative local martingale, converging to $0$ (which is a reasonable
 model for stock prices or for portfolios under the benchmark approach), then the drawdown process
 $DD_t=\max_{u\leq t} M_u-M_t$ is of class $(\Sigma)$ as well as the relative drawdown 
process $rDD_t=1-\frac{M_t}{\max_{u\leq t} M_u}$. For example, if $(M_t)_{t \geq 0}$ is a strict nonnegative
 local martingale such as the inverse of the $3$ dimensional Bessel process, then $(DD_t)_{t \geq 0}$ is
 of class $(\Sigma)$, satisfies condition (3) of the above theorem but is not uniformly integrable.  Note
that Theorem \ref{thmrepL} applies in this case, even if the local martingale 
part of the process $(DD_t)_{t \geq 0}$, which is equal to $(-M_t)_{t \geq 0}$, is not a true martingale. 
\subsection{The general case} \label{sec::3.2}
A general solution for Problem 1 is provided in \cite{NN1}; the measure $\mathcal{W}$ is a 
special case of this theorem. For the general case, we need to be very careful: the theorem below
 would be wrong under the usual assumptions and it would also be wrong if the
 filtration $(\mathcal{F}_t)_{t \geq 0}$ does not allow the extension of coherent 
probability measures. The N-usual assumptions are here to ensure in particular that there exists a continuous and adapted version of the process $A$ which is  defined everywhere (think of $A_t$ being for example the local time at the level $0$ of the Wiener process). According to Section \ref{sec::2}, typical probability spaces where the following theorem holds are $\mathcal{C}(\mathbb{R}_+,\mathbb{R})$ and $\mathcal{D}(\mathbb{R}_+,\mathbb{R})$.
\begin{theorem} \label{all}
Let $(X_t)_{t \geq 0}$ be a true submartingale of the class $(\Sigma)$: its local martingale part 
$(N_t)_{t \geq 0}$ is a true martingale, and $X_t$ is integrable for all $t \geq 0$.
We suppose that $(X_t)_{t \geq 0}$ is defined on a filtered probability space 
$(\Omega, \mathcal{F}, \mathbb{P}, (\mathcal{F}_t)_{t \geq 0})$ satisfying the property (NP), in particular, this
space satisfies the N-usual conditions and $\mathcal{F}$ is the $\sigma$-algebra generated by $\mathcal{F}_t$
 for $t \geq 0$. 
Then, there exists a unique $\sigma$-finite measure $\mathcal{Q}$, defined on 
$(\Omega, \mathcal{F}, \mathbb{P})$, such that for $g:= \sup\{t \geq 0, X_t = 0 \}$:
\begin{itemize}
\item $\mathcal{Q} [g = \infty] = 0$;
\item For all $t \geq 0$, and for all $\mathcal{F}_t$-measurable, bounded random variables $\Gamma_t$,
$$\mathcal{Q} \left[ \Gamma_t \, \mathds{1}_{g \leq t} \right] = \mathbb{P} \left[\Gamma_t X_t \right].$$
\end{itemize}
\noindent
\end{theorem}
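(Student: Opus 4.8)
The plan is to construct $\mathcal{Q}$ as an increasing limit of finite measures $\mathcal{Q}_n$, each carried by the event $\{g\le n\}$, and to produce each $\mathcal{Q}_n$ by extending a coherent family of measures absolutely continuous with respect to $\mathbb{P}$. The device that replaces the non-stopping-time $g$ throughout is the family of stopping times $d_t:=\inf\{u>t:X_u=0\}$ (with $\inf\emptyset=\infty$), which are genuine stopping times under the N-usual conditions and satisfy $\{g\le t\}=\{d_t=\infty\}=\bigcap_{T\ge t}\{d_t>T\}$. The key structural facts are that, because $(dA_t)$ is carried by the zeros of $X$, the process $A$ is constant on $[t,d_t]$, so that $X_{u\wedge d_t}=N_{u\wedge d_t}+A_t$ for $u\ge t$; since $N$ is a true martingale, optional sampling at the bounded times $u\wedge d_t$ shows that $(X_{u\wedge d_t})_{u\ge t}$ is a true $\mathbb{P}$-martingale started at $X_t$. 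Combined with $X_{d_t}=0$ on $\{d_t<\infty\}$ (right-continuity of $X$ along the zeros accumulating at $d_t$), this yields the two identities I will use repeatedly: $X_{T\wedge d_t}=X_T\,\mathds{1}_{d_t>T}$ and $\mathbb{E}[\Gamma_t X_{T\wedge d_t}]=\mathbb{E}[\Gamma_t X_t]$ for $\Gamma_t\in\mathcal{F}_t$ and $T\ge t$.

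First I would fix $n$ and define, for $t\ge n$, the finite measure $\mu^{(n)}_t:=X_{t\wedge d_n}\cdot\mathbb{P}|_{\mathcal{F}_t}$, completed for $t<n$ by restricting $X_n\cdot\mathbb{P}|_{\mathcal{F}_n}$ to $\mathcal{F}_t$. The martingale property of $(X_{t\wedge d_n})$ makes this family coherent, and every member is absolutely continuous with respect to $\mathbb{P}|_{\mathcal{F}_t}$; the proposition on extension of coherent $\mathbb{P}$-absolutely-continuous families (valid because the space satisfies property (NP), hence is the N-augmentation of a space with property (P)) then produces a unique finite measure $\mathcal{Q}_n$ on $\mathcal{F}$ agreeing with $\mu^{(n)}_t$ on each $\mathcal{F}_t$. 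That $\mathcal{Q}_n$ is carried by $\{g\le n\}$ follows from $\mathcal{Q}_n[d_n\le T]=\mathbb{E}[X_{T\wedge d_n}\mathds{1}_{d_n\le T}]=\mathbb{E}[X_{d_n}\mathds{1}_{d_n\le T}]=0$, whence $\mathcal{Q}_n[d_n<\infty]=0$.

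Next I would check the master equation at level $n$. For $t\le n$, $\Gamma_t\in\mathcal{F}_t$ and $T\ge n$, the set $\{d_t>T\}$ forces $d_n>T$, so $X_{T\wedge d_n}=X_T$ there and $\mathcal{Q}_n[\Gamma_t\mathds{1}_{d_t>T}]=\mathbb{E}[\Gamma_t X_T\mathds{1}_{d_t>T}]=\mathbb{E}[\Gamma_t X_{T\wedge d_t}]=\mathbb{E}[\Gamma_t X_t]$, independently of $T$. Letting $T\to\infty$ and using $\{d_t>T\}\downarrow\{g\le t\}$ gives $\mathcal{Q}_n[\Gamma_t\mathds{1}_{g\le t}]=\mathbb{E}[\Gamma_t X_t]$. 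The same computation shows that $\mathcal{Q}_n$ and $\mathcal{Q}_{n+1}$ agree on $\{g\le n\}$ and that $\mathcal{Q}_n\le\mathcal{Q}_{n+1}$, so I may set $\mathcal{Q}:=\sup_n\mathcal{Q}_n$. This $\mathcal{Q}$ is $\sigma$-finite because it is finite on each $\{g\le n\}$ (of mass at most $\mathbb{E}[X_n]$) and these sets increase to $\{g<\infty\}$, while $\mathcal{Q}[g=\infty]=\lim_n\mathcal{Q}_n[g=\infty]=0$; passing to the limit in the level-$n$ master equation (for $n\ge t$) yields $\mathcal{Q}[\Gamma_t\mathds{1}_{g\le t}]=\mathbb{E}[\Gamma_t X_t]$ for every $t$. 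Uniqueness would follow from a monotone-class argument: two candidate measures agree on the $\pi$-system $\{\Lambda_t\cap\{g\le t\}\}$, and the sets $\{g\le n\}$ form an exhausting sequence of finite measure, so the two measures coincide on $\{g<\infty\}$ and both annihilate $\{g=\infty\}$.

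The main obstacle is the construction of each $\mathcal{Q}_n$ as a bona fide measure on the whole of $\mathcal{F}=\bigvee_t\mathcal{F}_t$ rather than on $\mathcal{F}_n$ alone: one must prescribe the law of the post-$n$ trajectory conditioned to avoid $0$, and there is no measure on $\mathcal{F}$ doing this unless the filtration permits the extension of a coherent family --- precisely the failure illustrated in Section~\ref{sec::2} under the usual conditions. This is where property (NP) and the N-usual conditions are indispensable, both to guarantee that $d_n$ is a stopping time with a continuous adapted version of $A$ at hand and to license the extension step. The only other delicate points are purely technical: checking $X_{d_t}=0$ on $\{d_t<\infty\}$ from c\`adl\`ag regularity, and justifying the interchange of limit and integral in $\{d_t>T\}\downarrow\{g\le t\}$, both of which are routine given the finiteness of each $\mathcal{Q}_n$.
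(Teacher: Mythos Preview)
Your argument is correct but follows a genuinely different route from the paper's. The paper does not stop $X$ at $d_n$; instead it uses the functional transformation of Proposition~\ref{lemmatrans}: for a strictly positive, bounded, integrable $f$ with $G(x)=\int_x^\infty f$, it shows that
\[
M^f_t \;=\; G(A_t)-\mathbb{E}_{\mathbb{P}}[G(A_\infty)\mid\mathcal{F}_t]+f(A_t)X_t
\]
is a nonnegative c\`adl\`ag martingale, extends $(M^f_t\cdot\mathbb{P}|_{\mathcal{F}_t})_{t\ge0}$ to a single finite measure $\mathcal{M}^f$ on $\mathcal{F}$ via property~(NP), and then sets $\mathcal{Q}:=\frac{1}{f(A_\infty)}\cdot\mathcal{M}^f$. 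The verification that this works (for $f$ with $G/f$ bounded, e.g.\ $f(x)=e^{-x}$) and the independence of $f$ are handled separately.

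Your approach is more elementary: it uses only optional stopping for the true martingale $N$ and the extension proposition, and it bypasses the $(\Sigma)$--stability lemma entirely. It also makes the restriction $\mathcal{Q}|_{\{g\le n\}}$ completely explicit as the measure with local densities $(X_{t\wedge d_n})_{t\ge n}$. The paper's route, by contrast, immediately produces the martingales $M^f_t$ and the identity $\mathcal{M}^f=f(A_\infty)\cdot\mathcal{Q}$, which are the starting point for the whole theory of $(M_t(F))_{t\ge0}$ in Section~\ref{sec::4}; your construction would have to recover these separately. A minor point you glossed over: the inequality $\mathcal{Q}_n\le\mathcal{Q}_{n+1}$ on all of $\mathcal{F}$ (not just on $\bigcup_T\mathcal{F}_T$) is cleanest obtained by showing $\mathcal{Q}_n=\mathds{1}_{\{g\le n\}}\cdot\mathcal{Q}_{n+1}$ as finite measures agreeing on that $\pi$-system with the same total mass $\mathbb{E}[X_n]$, rather than by a direct monotone-class comparison.
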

\noindent
In \cite{NN1}, the measure $\mathcal{Q}$ is explicitly constructed, in the following way (with a
slightly different notation). Let $f$ be
a Borel, integrable, strictly positive and bounded function from $\mathbb{R}$ to $\mathbb{R}$, and 
let us define the function $G$ by the formula:
$$ G(x) = \int_{x}^{\infty} f(y) \, dy.$$
One can prove
that the process \begin{equation}
 \left( M^f_t := G(A_t) - \mathbb{E}_{\mathbb{P}} [G(A_{\infty})| \mathcal{F}_t] +
f(A_t) X_t \right)_{t \geq 0}, \label{MTF2}
\end{equation}
\noindent
is a martingale with respect to $\mathbb{P}$ and the filtration $(\mathcal{F}_t)_{t \geq 0}$.
Since $(\Omega, \mathcal{F}, \mathbb{P}, (\mathcal{F}_t)_{t \geq 0})$ satisfies the N-usual conditions
and since $G(A_t) \geq G(A_{\infty})$, one can suppose that this martingale is nonnegative and c\`adl\`ag, by 
choosing carefully the 
version of $\mathbb{E}_{\mathbb{P}} [G(A_{\infty})|\mathcal{F}_t]$. In this case, since 
$(\Omega, \mathcal{F}, \mathbb{P}, (\mathcal{F}_t)_{t \geq 0})$ satisfies the property (NP), there 
exists a unique finite measure $\mathcal{M}^f$ such that for all $t \geq 0$, and for all bounded,
 $\mathcal{F}_t$-measurable functionals $\Gamma_t$: 
$$\mathcal{M}^f [\Gamma_t] = \mathbb{E}_{\mathbb{P}} [\Gamma_t M_t^f].$$
Now, since $f$ is strictly positive, one can define a $\sigma$-finite measure $\mathcal{Q}^f$ by:
$$\mathcal{Q}^f := \frac{1}{f(A_{\infty})} \, . \mathcal{M}^f.$$
It is proved is \cite{NN1} that if the function $G/f$ is unformly bounded (this condition 
is, for example, satisfied for $f (x) = e^{-x}$), then $\mathcal{Q}^f$ satisfies the conditions
defining $\mathcal{Q}$ in Theorem \ref{all}, which implies the existence part of this result. 
The uniqueness part is proved just after in a very easy way: one remarkable consequence 
of it is the fact that $\mathcal{Q}^f$ does not depend on the choice of $f$.
The measure $\mathcal{Q}$ has many other interesting properties, which will be detailed 
in Section \ref{sec::4}. 
\section{Further properties of $\mathcal{Q}$ and some remarkable associated martingales, and penalisation
results} \label{sec::4}
The properties of $\mathcal{Q}$ given in this section are stated and proved in \cite{NN3}, except the 
results concerning penalisation, which are shown in \cite{NN4}.  
By the construction of $\mathcal{Q}$ described above, it is clear that if $f$ is a 
 Borel, integrable, strictly positive and bounded function from $\mathbb{R}$ to $\mathbb{R}$, and if, 
with the notation above, $G/f$ is uniformly bounded, then
\begin{equation}
\mathcal{M}^f = f(A_{\infty}) \, . \mathcal{Q}. \label{MFQ}
\end{equation}
Now, by using (\ref{MTF2}), it is possible to construct $(M_t^f)_{t \geq 0}$, and then $\mathcal{M}^f$,
for all Borel, integrable and nonnegative functions $f$. Moreover, it is proved in \cite{NN3} that 
(\ref{MFQ}) remains true for any function $f$ satisfying these weaker assumptions.  
The relation between the functional $f(A_{\infty})$ and the martingale $(M_t^f)_{t \geq 0}$ can be 
generalized as follows:
\begin{proposition} \label{MtF}
We suppose that the assumptions of Theorem \ref{all} hold, and we take the same notation. 
Let $F$ be a $\mathcal{Q}$-integrable, nonnegative functional defned on $(\Omega, \mathcal{F})$. Then, there 
exists
 a c\`adl\`ag $\mathbb{P}$-martingale $(M_t(F))_{t \geq 0}$ such
 that the measure $\mathcal{M}^F := F. \mathcal{Q}$ is the unique finite measure
 satisfying, for all $t \geq 0$, and 
for all bounded, $\mathcal{F}_t$-measurable functionals $\Gamma_t$:
$$\mathcal{M}^F [\Gamma_t] = \mathbb{P} [\Gamma_t M_t(F)].$$
The martingale $(M_t(F))_{t \geq 0}$ is unique up to indistinguishability.
\end{proposition}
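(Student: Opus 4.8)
The plan is to establish Proposition \ref{MtF} by first reducing the general $\mathcal{Q}$-integrable functional $F$ to the case already treated, namely $F = f(A_\infty)$ with $f$ Borel, integrable and nonnegative, for which the martingale $(M^f_t)_{t \geq 0}$ has been constructed explicitly via (\ref{MTF2}) and satisfies (\ref{MFQ}). The defining property we want is that $\mathcal{M}^F := F.\mathcal{Q}$ is the unique finite measure whose local density with respect to $\mathbb{P}$ on each $\mathcal{F}_t$ is given by a c\`adl\`ag martingale $(M_t(F))_{t \geq 0}$. First I would check that $\mathcal{M}^F$ is indeed finite: since $F$ is $\mathcal{Q}$-integrable, $\mathcal{M}^F[\Omega] = \mathcal{Q}[F] < \infty$. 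The restriction of $\mathcal{M}^F$ to each $\mathcal{F}_t$ is then a finite measure, and the key point to verify is that it is absolutely continuous with respect to the restriction of $\mathbb{P}$ to $\mathcal{F}_t$.

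\medskip
\noindent
The heart of the argument is this absolute continuity. I would argue that for any $\mathcal{F}_t$-measurable set $\Lambda_t$ with $\mathbb{P}[\Lambda_t] = 0$, one has $\mathcal{M}^F[\Lambda_t] = 0$. The natural route is to use the defining relation of $\mathcal{Q}$ from Theorem \ref{all}, which controls $\mathcal{Q}$ on the set $\{g \leq t\}$ in terms of $X_t$ under $\mathbb{P}$, together with the structure of $\mathcal{Q}$ on $\{g > t\}$. On $\{g \leq t\}$ the measure $\mathcal{Q}$ is dominated by $\mathbb{P}$ through $X_t$, so the $\mathbb{P}$-null set $\Lambda_t$ contributes nothing there; on $\{g > t\}$ one uses that $\mathcal{Q}[g = \infty] = 0$ together with a limiting argument over $\{g \leq s\}$ for $s \geq t$, invoking the N-usual conditions so that the relevant densities are well-defined c\`adl\`ag objects and the conditioning is legitimate. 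Once absolute continuity on each $\mathcal{F}_t$ is established, the Radon--Nikodym theorem produces a density $M_t(F)$; coherence of these densities as $t$ varies (which follows from the compatibility of the restrictions of $\mathcal{M}^F$) shows that $(M_t(F))_{t \geq 0}$ is a $\mathbb{P}$-martingale, and the N-usual conditions guarantee a c\`adl\`ag modification. Uniqueness up to indistinguishability is then immediate from the fact that two c\`adl\`ag martingales agreeing in $\mathbb{P}$-distribution against every bounded $\mathcal{F}_t$-measurable $\Gamma_t$ must be indistinguishable under the N-usual conditions.

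\medskip
\noindent
\textbf{The main obstacle} I expect is precisely the absolute continuity of $\mathcal{M}^F$ with respect to $\mathbb{P}$ on each $\mathcal{F}_t$, for a \emph{general} $\mathcal{Q}$-integrable $F$ rather than the special multiplicative functionals $f(A_\infty)$. For the latter, (\ref{MFQ}) gives the density directly and the claim is essentially built in; the difficulty is to propagate this to arbitrary $F$. The cleanest way I would attempt is a monotone class / approximation argument: the identity $\mathcal{M}^F[\Gamma_t] = \mathbb{P}[\Gamma_t M_t(F)]$ is linear and monotone in $F$, it holds for $F = f(A_\infty)$ by the construction above, and one would like to extend it to all nonnegative $\mathcal{Q}$-integrable $F$ by approximating a general $F$ from below by functionals for which the martingale density is already known. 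The delicate step is ensuring that the approximating densities converge to a genuine c\`adl\`ag martingale rather than merely to a measure; here one must use that $\mathcal{Q}$ is $\sigma$-finite with $\mathcal{Q}[g=\infty]=0$, so that $F$ is supported (up to $\mathcal{Q}$-null sets) on $\{g < \infty\} = \bigcup_t \{g \leq t\}$, where each piece is controlled by $\mathbb{P}$ via $X_t$. This localization over $\{g \leq t\}$, combined with the uniqueness half of Theorem \ref{all}, is what ultimately forces the density to exist and be a martingale, and it is the step where the careful measure-theoretic bookkeeping under the N-usual conditions cannot be avoided.
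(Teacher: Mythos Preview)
The paper does not prove this proposition: Section~\ref{sec::4} opens by stating that all the results there are proved in \cite{NN3}, and Proposition~\ref{MtF} is given without argument. So there is no paper-side proof to compare against directly; the surrounding text only records the special case $F=f(A_\infty)$ and the formula $M_t(f(A_\infty))=M_t^f$.

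On its merits, your proposal is correct, and in fact the direct route you sketch in your second paragraph already settles the matter without the approximation machinery you worry about in the third. If $\Lambda_t\in\mathcal{F}_t$ with $\mathbb{P}[\Lambda_t]=0$, then for every $s\geq t$ one has $\Lambda_t\in\mathcal{F}_s$, and the defining relation of $\mathcal{Q}$ gives
\[
\mathcal{Q}[\Lambda_t,\,g\leq s]=\mathbb{E}_{\mathbb{P}}[\mathds{1}_{\Lambda_t}X_s]=0.
\]
Letting $s\to\infty$ and using $\mathcal{Q}[g=\infty]=0$ yields $\mathcal{Q}[\Lambda_t]=0$, hence $\mathcal{M}^F[\Lambda_t]=\int_{\Lambda_t}F\,d\mathcal{Q}=0$. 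This is the whole absolute-continuity step, for \emph{arbitrary} nonnegative $\mathcal{Q}$-integrable $F$; no reduction to the special functionals $f(A_\infty)$ is needed. The remainder of your outline --- Radon--Nikodym densities on each $\mathcal{F}_t$, the martingale property from compatibility of restrictions, a c\`adl\`ag modification under the N-usual conditions, and uniqueness up to indistinguishability --- is routine and correct.

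So your identified ``main obstacle'' is not actually an obstacle: the monotone-class approximation through $f(A_\infty)$ is an unnecessary detour, and the argument you already have in the preceding paragraph suffices.
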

\noindent
If $f$ is Borel, integrable and nonnegative, then $f(A_{\infty})$ is integrable with respect to $\mathcal{Q}$ 
and one has
$$M_t (f(A_{\infty})) = M_t^f,$$ 
which gives an explicit expression for $M_t (f(A_{\infty}))$. This explicit form can be generalized 
to the martingale $(M_t(F))_{t \geq 0}$ for all nonnegative and $\mathcal{Q}$-integrable 
functionals $F$, if the submartingale $(X_t)_{t \geq 0}$ is uniformly integrable. More precisely, one 
has the following result:
\begin{proposition} \label{uniformlyintegrable}
Let us suppose that the assumptions of Theorem \ref{all} are satisfied, and that the process $(X_t)_{t \geq 0}$ is
uniformly integrable. Then, $X_t$ tends a.s. to a limit $X_{\infty}$ when $t$ goes to infinity, and 
the measure $\mathcal{Q}$ is absolutely continuous with respect to $\mathbb{P}$, with density $X_{\infty}$. 
Moreover, a nonnegative functional $F$ is integrable with respect to $\mathcal{Q}$ iff $F X_{\infty}$ is
 integrable
with respect to $\mathbb{P}$, in this case, $(M_t(F))_{t \geq 0}$ a the c\`adl\`ag version of the 
conditional expectation $(\mathbb{P} [F X_{\infty} | \mathcal{F}_t])_{t \geq 0}$. In particular, it 
is uniformly integrable, and it converges a.s. and  in 
$L^1$ to $F X_{\infty}$ when $t$ goes to infinity. 
\end{proposition}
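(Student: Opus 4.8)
The plan is to first establish the convergence of $(X_t)$, then to identify $\mathcal{Q}$ as $X_\infty \cdot \mathbb{P}$ by passing to the limit in the master equation of Theorem \ref{all}, and finally to read off the description of $(M_t(F))_{t \geq 0}$ from Proposition \ref{MtF} together with this identification. First I would note that $(X_t)_{t \geq 0}$ is a nonnegative submartingale, so $\mathbb{E}_{\mathbb{P}}[X_t^+] = \mathbb{E}_{\mathbb{P}}[X_t]$, and uniform integrability forces $\sup_{t} \mathbb{E}_{\mathbb{P}}[X_t] < \infty$. Doob's submartingale convergence theorem then gives that $X_t$ converges $\mathbb{P}$-a.s. to an integrable limit $X_\infty$, and uniform integrability upgrades this to convergence in $L^1(\mathbb{P})$; in particular $\mathbb{E}_{\mathbb{P}}[\Gamma_s X_t] \to \mathbb{E}_{\mathbb{P}}[\Gamma_s X_\infty]$ for every bounded random variable $\Gamma_s$.

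The core step is the identification $\mathcal{Q} = X_\infty \cdot \mathbb{P}$. I would fix $s \geq 0$ and $B \in \mathcal{F}_s$. For every $t \geq s$ the event $B$ is $\mathcal{F}_t$-measurable, so the defining relation of $\mathcal{Q}$ reads $\mathcal{Q}[\mathds{1}_B \mathds{1}_{g \leq t}] = \mathbb{E}_{\mathbb{P}}[\mathds{1}_B X_t]$, and I would let $t \to \infty$ on both sides. On the left, $\mathds{1}_{g \leq t} \uparrow \mathds{1}_{g < \infty}$, and since $\mathcal{Q}[g = \infty] = 0$ we have $\mathds{1}_{g < \infty} = 1$ $\mathcal{Q}$-a.e.; monotone convergence therefore yields $\mathcal{Q}[\mathds{1}_B \mathds{1}_{g \leq t}] \to \mathcal{Q}[B]$. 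On the right, the $L^1$-convergence noted above gives $\mathbb{E}_{\mathbb{P}}[\mathds{1}_B X_t] \to \mathbb{E}_{\mathbb{P}}[\mathds{1}_B X_\infty]$. Hence $\mathcal{Q}[B] = \mathbb{E}_{\mathbb{P}}[\mathds{1}_B X_\infty]$ for all $B \in \bigcup_{s \geq 0} \mathcal{F}_s$. Taking $B = \Omega$ shows $\mathcal{Q}[\Omega] = \mathbb{E}_{\mathbb{P}}[X_\infty] < \infty$, so $\mathcal{Q}$ is in fact finite; as $\mathcal{Q}$ and the finite measure $X_\infty \cdot \mathbb{P}$ agree on the algebra $\bigcup_{s} \mathcal{F}_s$, which is a $\pi$-system generating $\mathcal{F} = \bigvee_s \mathcal{F}_s$, the monotone class theorem gives $\mathcal{Q} = X_\infty \cdot \mathbb{P}$ on all of $\mathcal{F}$.

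With this identity the remaining assertions follow formally. For nonnegative $F$ one has $\mathcal{Q}[F] = \mathbb{E}_{\mathbb{P}}[F X_\infty]$, so $F$ is $\mathcal{Q}$-integrable if and only if $F X_\infty \in L^1(\mathbb{P})$. For such $F$, Proposition \ref{MtF} characterizes $M_t(F)$ by $\mathbb{E}_{\mathbb{P}}[\Gamma_t M_t(F)] = (F \cdot \mathcal{Q})[\Gamma_t]$ for all bounded $\mathcal{F}_t$-measurable $\Gamma_t$; since $F \cdot \mathcal{Q} = (F X_\infty) \cdot \mathbb{P}$, the right-hand side equals $\mathbb{E}_{\mathbb{P}}[\Gamma_t F X_\infty]$, and as $M_t(F)$ is $\mathcal{F}_t$-measurable this forces $M_t(F) = \mathbb{E}_{\mathbb{P}}[F X_\infty \mid \mathcal{F}_t]$ $\mathbb{P}$-a.s. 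Thus $(M_t(F))_{t \geq 0}$ is the c\`adl\`ag version of the conditional-expectation martingale closed by the integrable, $\mathcal{F}$-measurable variable $F X_\infty$; it is consequently uniformly integrable and, by the martingale convergence theorem, converges a.s. and in $L^1$ to $\mathbb{E}_{\mathbb{P}}[F X_\infty \mid \mathcal{F}] = F X_\infty$.

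The delicate point is the limiting argument in the second paragraph: one must use $\mathcal{Q}[g = \infty] = 0$ to replace $\mathds{1}_{g < \infty}$ by $1$ on the $\mathcal{Q}$-side and, simultaneously, the uniform integrability of $(X_t)_{t \geq 0}$ (and not merely its a.s. convergence) to pass to the limit on the $\mathbb{P}$-side. The finiteness of $\mathcal{Q}$, which emerges only a posteriori from the choice $B = \Omega$, is precisely what legitimizes the monotone-class identification of the two measures on $\mathcal{F}$.
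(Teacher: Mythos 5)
Your proof is correct, and it is the natural argument in this framework: pass to the limit $t\to\infty$ in the defining identity $\mathcal{Q}[\mathds{1}_B\mathds{1}_{g\leq t}]=\mathbb{E}_{\mathbb{P}}[\mathds{1}_B X_t]$ using $\mathcal{Q}[g=\infty]=0$ on one side and uniform integrability on the other, identify the two finite measures on the generating algebra $\bigcup_s\mathcal{F}_s$, and then read off $M_t(F)=\mathbb{E}_{\mathbb{P}}[FX_\infty\mid\mathcal{F}_t]$ from the characterization in Proposition \ref{MtF}. The paper itself defers the proof to \cite{NN3}, but your route matches the intended one and I see no gaps.
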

\noindent
A more interesting case is when we suppose that $A_{\infty}$ is infinite, $\mathbb{P}$-almost surely.
From now, until the end of this section, we always implicitly make this assumption (which is satisfied, in 
particular, if $(X_t)_{t \geq 0}$ is a reflected Brownian motion). The following result gives the asymptotic
behaviour of $(X_t)_{t \geq 0}$ under $\mathcal{Q}$, when $t$ goes to infinity, and 
the behaviour of $M_t(F)$, which is not the same under $\mathbb{P}$ and
under $\mathcal{Q}$:
\begin{proposition}
Let us suppose that the assumptions of Theorem \ref{all} are satisfied, and that
 $A_{\infty} = \infty$, $\mathbb{P}$-almost surely. Then, $\mathcal{Q}$-almost everywhere, $X_t$ tends to 
infinity when $t$ goes to infinity, and 
for all nonnegative, $\mathcal{Q}$-integrable
functionals $F$, the martingale $(M_t(F))_{t \geq 0}$ tends $\mathbb{P}$-almost surely to zero and 
$\mathcal{Q}$-almost everywhere to infinity. Moreover, one has:
$$\frac{M_t(F)}{X_t} \underset{t \rightarrow \infty}{\longrightarrow} F,$$
$\mathcal{Q}$-almost everywhere.
\end{proposition}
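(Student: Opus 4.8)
The plan is to deduce all four assertions from two structural facts: that $\mathbb{P}$ and $\mathcal{Q}$ are mutually singular, and that $M_t(F)/X_t$ is, after correcting for the non-adapted indicator $\mathds{1}_{g\le t}$, a version of the $\mathcal{Q}$-conditional expectation $\mathbb{E}_{\mathcal{Q}}[F\mid\mathcal{F}_t]$. First I would establish the singularity. Since the measure $(dA_t)$ is carried by $\{X_t=0\}$, the hypothesis $A_\infty=\infty$ $\mathbb{P}$-a.s. forces the zero set of $X$ to be unbounded, hence $g=\infty$ $\mathbb{P}$-a.s.; combined with $\mathcal{Q}[g=\infty]=0$ from Theorem \ref{all}, this shows that $\mathbb{P}$ is carried by $\{g=\infty\}$ and $\mathcal{Q}$ by $\{g<\infty\}$, so $\mathbb{P}\perp\mathcal{Q}$. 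In particular $\mathcal{Q}$-a.e. we have $g<\infty$, and since $A$ is constant after its last increase time $g$, $A_t\to A_\infty=A_g<\infty$ $\mathcal{Q}$-a.e.

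For the $\mathbb{P}$-a.s. convergence, recall from Proposition \ref{MtF} that $M_t(F)$ is the density on $\mathcal{F}_t$ of the finite measure $F.\mathcal{Q}$ with respect to $\mathbb{P}$. Because $F.\mathcal{Q}\ll\mathcal{Q}\perp\mathbb{P}$, the measure $F.\mathcal{Q}$ is singular with respect to $\mathbb{P}$, and the martingale convergence theorem for Radon--Nikodym densities along a filtration (the a.s. limit of the local densities being the density of the absolutely continuous part) gives $M_t(F)\to 0$ $\mathbb{P}$-a.s. Applying the same theorem in the reverse direction --- $1/M_t(F)$ is the local $F.\mathcal{Q}$-density of $\mathbb{P}$ (valid since $M_t(F)>0$ $(F.\mathcal{Q})$-a.e.), and $\mathbb{P}\perp F.\mathcal{Q}$ --- yields $M_t(F)\to\infty$, $(F.\mathcal{Q})$-a.e., i.e. $\mathcal{Q}$-a.e. on $\{F>0\}$, which is the blow-up assertion.

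To obtain $X_t\to\infty$ under $\mathcal{Q}$ I would use the explicit martingale \eqref{MTF2} with $f(x)=e^{-x}$, so that $G(x)=e^{-x}$ and $G/f\equiv 1$ is bounded. Since $A_\infty=\infty$ $\mathbb{P}$-a.s. we have $G(A_\infty)=0$, whence $M_t^f=e^{-A_t}(1+X_t)$ is a nonnegative $\mathbb{P}$-martingale whose associated measure is $\mathcal{M}^f=e^{-A_\infty}.\mathcal{Q}$ by \eqref{MFQ}. As $e^{-A_\infty}>0$, $\mathcal{M}^f$ is equivalent to $\mathcal{Q}$ and hence singular with respect to $\mathbb{P}$, so the reverse direction above gives $M_t^f\to\infty$ $\mathcal{Q}$-a.e.; since $e^{-A_t}\to e^{-A_\infty}\in(0,1]$ $\mathcal{Q}$-a.e., this forces $X_t\to\infty$ $\mathcal{Q}$-a.e. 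The ratio statement then rests on the key identity, obtained by applying the defining relation of $\mathcal{Q}$ to the $\mathcal{F}_t$-measurable integrand $\Gamma_s M_t(F)/X_t$ and then Proposition \ref{MtF}: for all $t\ge s$ and bounded $\mathcal{F}_s$-measurable $\Gamma_s\ge0$,
\[
\mathcal{Q}\!\left[\mathds{1}_{g\le t}\,\Gamma_s\,\frac{M_t(F)}{X_t}\right]=\mathbb{P}[\Gamma_s M_t(F)]=\mathcal{Q}[\Gamma_s F].
\]
Taking $s=t$ this reads $\tfrac{M_t(F)}{X_t}\,\mathbb{E}_{\mathcal{Q}}[\mathds{1}_{g\le t}\mid\mathcal{F}_t]=\mathbb{E}_{\mathcal{Q}}[F\mid\mathcal{F}_t]$, so that $M_t(F)/X_t$ is, up to the factor $\mathbb{E}_{\mathcal{Q}}[\mathds{1}_{g\le t}\mid\mathcal{F}_t]$, a version of $\mathbb{E}_{\mathcal{Q}}[F\mid\mathcal{F}_t]$. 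Letting $t\to\infty$ and invoking Lévy's upward theorem under $\mathcal{Q}$, the right-hand side tends to $F$ and the factor tends to $\mathbb{E}_{\mathcal{Q}}[\mathds{1}_{g<\infty}\mid\mathcal{F}_\infty]=1$ (as $g<\infty$ $\mathcal{Q}$-a.e.), whence $M_t(F)/X_t\to F$ $\mathcal{Q}$-a.e.

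I expect the main obstacle to lie in making this last step rigorous: the measure $\mathcal{Q}$ is only $\sigma$-finite and the space only N-complete, so the conditional expectations $\mathbb{E}_{\mathcal{Q}}[\,\cdot\mid\mathcal{F}_t]$ and the associated convergence theorems must be justified for $\mathcal{Q}$ --- for instance by localising on the sets $\{g\le n\}$, where $\mathcal{Q}$ agrees with a finite measure absolutely continuous with respect to $\mathbb{P}$ --- and the non-adaptedness of $g$ must be handled throughout by absorbing $\mathds{1}_{g\le t}$ through the defining relation of $\mathcal{Q}$ rather than treating it as $\mathcal{F}_t$-measurable. The behaviour on $\{F=0\}$ in the blow-up assertion likewise requires the explicit structure above rather than the ratio limit alone.
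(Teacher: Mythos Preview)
The paper itself does not prove this proposition; it refers to \cite{NN3}. I therefore evaluate your proposal against what the construction in Section~\ref{sec::3.2} makes natural. Your first three steps are sound: the singularity $\mathbb{P}\perp\mathcal{Q}$ via $\{g=\infty\}$, the convergence $M_t(F)\to 0$ under $\mathbb{P}$ and $M_t(F)\to\infty$ under $F.\mathcal{Q}$ by the Radon--Nikodym martingale theorem for mutually singular measures, and $X_t\to\infty$ under $\mathcal{Q}$ via the explicit martingale $M_t^f=e^{-A_t}(1+X_t)$. Your caveat about $\{F=0\}$ is also correct and is a genuine imprecision in the stated proposition rather than a defect of your argument.

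The gap is in the ratio step. Your displayed identity is right, but deducing the limit from it via conditional expectations under $\mathcal{Q}$ runs into two real obstacles. First, $\mathbb{E}_{\mathcal{Q}}[\mathds{1}_{g\le t}\mid\mathcal{F}_t]$ has both integrand and conditioning $\sigma$-algebra varying with $t$, so L\'evy's theorem does not apply as written (monotonicity of $\mathds{1}_{g\le t}$ salvages it, but you do not invoke this). Second, and more seriously, conditional expectations under the $\sigma$-finite $\mathcal{Q}$ with respect to $\mathcal{F}_t$ need $\mathcal{Q}$ to be $\sigma$-finite on $\mathcal{F}_t$, which is not clear (for $t=0$ it would force $\mathcal{Q}$ finite). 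Your proposed fix---localising on $\{g\le n\}$---does not work: on $\mathcal{F}$ the restriction of $\mathcal{Q}$ to $\{g\le n\}$ is still \emph{singular} with respect to $\mathbb{P}$; the relation $\mathcal{Q}[\,\cdot\,,g\le n]=\mathbb{P}[\,\cdot\,X_n]$ holds only on $\mathcal{F}_n$, and $\{g\le n\}$ is not $\mathcal{F}_t$-measurable for any $t$.

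The clean route, and the one the paper's construction points to, is to stay with the \emph{finite} measure $\mathcal{M}^f$ (take $f(x)=e^{-x}$) that you already used. Since $\mathcal{M}^f$ is equivalent to $\mathcal{Q}$ and has local density $M_t^f$ with respect to $\mathbb{P}$, the ratio $M_t(F)/M_t^f$ is the $\mathcal{F}_t$-density of $F.\mathcal{Q}=(Fe^{A_\infty}).\mathcal{M}^f$ with respect to $\mathcal{M}^f$; as $Fe^{A_\infty}\in L^1(\mathcal{M}^f)$, this is a uniformly integrable $\mathcal{M}^f$-martingale converging $\mathcal{M}^f$-a.s., hence $\mathcal{Q}$-a.e., to $Fe^{A_\infty}$. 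Then
\[
\frac{M_t(F)}{X_t}=\frac{M_t(F)}{M_t^f}\cdot\frac{e^{-A_t}(1+X_t)}{X_t}\ \underset{t\to\infty}{\longrightarrow}\ Fe^{A_\infty}\cdot e^{-A_\infty}=F,
\]
$\mathcal{Q}$-a.e., using $X_t\to\infty$ and $A_t\to A_\infty<\infty$. This bypasses conditional expectations under $\mathcal{Q}$ entirely.
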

\noindent
 By definition, the martingales of the form $(M_t(F))_{t \geq 0}$ are exactly the local densities of the 
finite measures which are absolutely continuous with respect to $\mathcal{Q}$. This situation is
similar to the case of uniformly integrable, nonnegative martingales, which are local densities
of finite measures, absolutely continuous with respect to $\mathbb{P}$. The following decomposition of 
nonnegative supermartingales, already proved in \cite{NRY} in the case of 
the reflected Brownian motion, involves simultaneously these two kind of martingales:
\begin{proposition} \label{decomposition}
Let us suppose that the assumptions of Theorem \ref{all} are satisfied, and that
 $A_{\infty} = \infty$, $\mathbb{P}$-almost surely. Let $Z$ be a nonnegative, c\`adl\`ag
 $\mathbb{P}$-supermartingale. 
We denote by $Z_{\infty}$ the $\mathbb{P}$-almost sure limit of $Z_t$ when $t$ goes to infinity. 
Then, $\mathcal{Q}$-almost everywhere, the quotient $Z_t/X_t$ is well-defined for $t$ large enough
 and converges, when $t$ goes to infinity, to a limit $z_{\infty}$, integrable with respect
 to $\mathcal{Q}$,  and $(Z_t)_{t \geq 0}$ decomposes
 as
$$\left(Z_t = M_t (z_{\infty}) + \mathbb{P} [Z_{\infty} |\mathcal{F}_t] + \xi_t \right)_{t \geq 0},$$
where $(\mathbb{P} [Z_{\infty} |\mathcal{F}_t])_{t \geq 0}$ denotes a c\`adl\`ag version of the 
conditional expectation of $Z_{\infty}$ with respect to $\mathcal{F}_t$, and $(\xi_t)_{t \geq 0}$ is 
a nonnegative, c\`adl\`ag $\mathbb{P}$-supermartingale, such that:
\begin{itemize}
\item $Z_{\infty} \in L^1_+ (\mathcal{F}, \mathbb{P})$, hence $\mathbb{P} [Z_{\infty} | \mathcal{F}_t]$ 
converges $\mathbb{P}$-almost surely and in $L^1 (\mathcal{F}, \mathbb{P})$ towards $Z_{\infty}$.
\item $\frac{\mathbb{P} [Z_{\infty} | \mathcal{F}_t] + \xi_t}{X_t} \underset{t \rightarrow \infty}
{\longrightarrow} 0$, $\mathcal{Q}$-almost everywhere.
\item $M_t(z_{\infty}) + \xi_t \underset{t \rightarrow \infty}{\longrightarrow} 0$, $\mathbb{P}$-almost surely.  
\end{itemize}
\noindent
Moreover, the decomposition is unique in the following sense: let $z'_{\infty}$ be a $\mathcal{Q}$-integrable, 
nonnegative functional, $Z'_{\infty}$ a $\mathbb{P}$-integrable, nonnegative random variable,
 $(\xi'_t)_{t \geq 0}$ a c\`adl\`ag, nonnegative $\mathbb{P}$-supermartingale, and let us suppose that
 for all $t\geq 0$,
$$Z_t = M_t (z'_{\infty}) + \mathbb{P} [Z'_{\infty} |\mathcal{F}_t] + \xi'_t.$$
Under these assumptions, if for $t$ going to infinity, $\xi'_t$ tends $\mathbb{P}$-almost surely to zero and
 $\xi'_t /X_t$ tends $\mathcal{Q}$-almost everywhere to zero, then $z'_{\infty} = z_{\infty}$, $\mathcal{Q}$-almost
everywhere, 
$Z'_{\infty} = Z_{\infty}$, $\mathbb{P}$-almost surely, and $\xi'$ is $\mathbb{P}$-indistinguishable with $\xi$. 
\end{proposition}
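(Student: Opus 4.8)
The plan is to reduce the statement to a Lebesgue-type decomposition of a finite measure with respect to the pair $(\mathbb{P},\mathcal{Q})$, exploiting the three-part decomposition already available for nonnegative martingales. First I would record the structural fact that, under the hypothesis $A_\infty=\infty$ $\mathbb{P}$-a.s., one has $g=\infty$ $\mathbb{P}$-almost surely: the continuous increasing process $A$ charges only $\{X=0\}$, so $A_\infty=\infty$ forces the zero set of $X$ to be unbounded. Since also $\mathcal{Q}[g=\infty]=0$, the measures $\mathbb{P}$ and $\mathcal{Q}$ are mutually singular, carried respectively by $\{g=\infty\}$ and $\{g<\infty\}$. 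This is what makes a genuine three-way split meaningful.

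Next I would reduce the supermartingale to a martingale. As a nonnegative c\`adl\`ag supermartingale, $Z$ converges $\mathbb{P}$-a.s. to $Z_\infty\in L^1_+(\mathbb{P})$, and $\mathbb{P}[Z_\infty|\mathcal{F}_t]\le Z_t$ by Fatou. Setting $m_t:=\downarrow\lim_{u\to\infty}\mathbb{P}[Z_u|\mathcal{F}_t]$ gives the Riesz decomposition $Z_t=m_t+\Pi_t$, where $m$ is a nonnegative $\mathbb{P}$-martingale and $\Pi\ge0$ is a potential (its own Riesz martingale part vanishes), so $\mathbb{P}[\Pi_t]\to0$ and $\Pi_t\to0$ $\mathbb{P}$-a.s.; moreover $m_\infty=Z_\infty$, since $\mathbb{P}[Z_\infty|\mathcal{F}_t]\le m_t\le Z_t$. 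The coherent, locally $\mathbb{P}$-absolutely continuous family $m_t\cdot\mathbb{P}|_{\mathcal{F}_t}$ then extends, by the extension proposition for such families on the N-augmented space, to a finite measure $\nu$ on $\mathcal{F}$ with $\nu|_{\mathcal{F}_t}=m_t\cdot\mathbb{P}|_{\mathcal{F}_t}$. Decomposing $\nu$ with respect to the mutually singular reference measures yields $\nu=W\cdot\mathbb{P}+z_\infty\cdot\mathcal{Q}+\nu_s$ with $W\in L^1_+(\mathbb{P})$, $z_\infty\in L^1_+(\mathcal{Q})$ and $\nu_s$ singular to both; reading off local densities and invoking Proposition \ref{MtF} gives
$$m_t=\mathbb{P}[W|\mathcal{F}_t]+M_t(z_\infty)+\eta_t,$$
where $\eta_t\ge0$ is the local density of $\nu_s$. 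Letting $t\to\infty$ $\mathbb{P}$-almost surely (the terms $M_t(z_\infty)$ and $\eta_t$ being local densities of measures singular to $\mathbb{P}$, hence tending to $0$) and using $m_\infty=Z_\infty$ identifies $W=Z_\infty$. Finally $\xi_t:=\eta_t+\Pi_t$ is a nonnegative supermartingale, and positivity of all three terms is automatic because each is the local density of a nonnegative measure.

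It then remains to check the asymptotics, which is the technical heart. The $\mathcal{Q}$-behaviour of $M_t(z_\infty)$ and its $\mathbb{P}$-a.s. decay to $0$ come from the preceding proposition, while $\mathbb{P}[Z_\infty|\mathcal{F}_t]\to Z_\infty$ $\mathbb{P}$-a.s. and in $L^1$ by L\'evy's theorem. The delicate points are the $\mathcal{Q}$-a.e. vanishing of $\mathbb{P}[Z_\infty|\mathcal{F}_t]/X_t$, of $\eta_t/X_t$, and of $\Pi_t/X_t$. For these I would use the identity, valid for bounded $\mathcal{F}_t$-measurable $\Gamma_t$ and $t\ge s$,
$$\mathcal{Q}\big[\mathds{1}_{g\le s}\,\Gamma_t\big]=\mathbb{P}\big[X_t\,\Theta_{s,t}\,\Gamma_t\big],\qquad \Theta_{s,t}:=\mathds{1}_{\{X_u\neq0,\,\forall u\in(s,t]\}},$$
which follows from the defining relation $\mathcal{Q}[\mathds{1}_{g\le t}\Gamma_t]=\mathbb{P}[X_t\Gamma_t]$ together with the factorisation $\mathds{1}_{g\le s}=\mathds{1}_{g\le t}\,\Theta_{s,t}$. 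Since $\Theta_{s,t}$ decreases in $t$, this identity shows that for a nonnegative martingale or potential $\Phi$ the process $(\Phi_t/X_t)_{t\ge s}$ is a supermartingale under the finite measure $\mathcal{Q}[\,\cdot\cap\{g\le s\}]$, hence converges $\mathcal{Q}$-a.e. on $\{g\le s\}$; its $L^1(\mathcal{Q})$-limit is computed via $\mathbb{P}[\Phi_t]\to0$ (for a potential) or via $\mathbb{P}[\Theta_{s,t}\Phi_t]\to\mathbb{P}[\mathds{1}_{g\le s}\Phi_\infty]=0$ (for a uniformly integrable martingale, using $\mathbb{P}[g<\infty]=0$), and equals $0$ in every case. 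Letting $s\to\infty$ and using $\mathcal{Q}[g=\infty]=0$ gives the three $\mathcal{Q}$-a.e. limits; the convergences of $\xi_t$ and of $Z_t/X_t=\sum(\cdots)/X_t\to z_\infty$ then follow by addition. This circle of $\mathcal{Q}$-a.e. estimates is what I expect to be the main obstacle.

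Uniqueness is immediate from the three limit conditions. Given a second decomposition $Z_t=M_t(z'_\infty)+\mathbb{P}[Z'_\infty|\mathcal{F}_t]+\xi'_t$ with $\xi'_t\to0$ $\mathbb{P}$-a.s. and $\xi'_t/X_t\to0$ $\mathcal{Q}$-a.e., letting $t\to\infty$ $\mathbb{P}$-almost surely (where $M_t(z'_\infty)\to0$ and $\mathbb{P}[Z'_\infty|\mathcal{F}_t]\to Z'_\infty$) forces $Z'_\infty=Z_\infty$ $\mathbb{P}$-a.s., and dividing by $X_t$ and letting $t\to\infty$ $\mathcal{Q}$-almost everywhere (where $M_t(z'_\infty)/X_t\to z'_\infty$ while the remaining two quotients vanish) forces $z'_\infty=z_\infty$ $\mathcal{Q}$-a.e. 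The supermartingales $\xi$ and $\xi'$ then agree and, being c\`adl\`ag, are indistinguishable. Everything outside the $\mathcal{Q}$-a.e. asymptotics is either structural (Riesz decomposition plus the measure-extension theorem) or a direct passage to the limit.
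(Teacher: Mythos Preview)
The paper itself does not prove this proposition; it only states it and refers to \cite{NN3}. After the statement, the paper explains that for a martingale $Z$ the decomposition corresponds to writing the extended measure $\mathbb{Q}_Z$ as a sum of a $\mathbb{P}$-absolutely continuous part, a $\mathcal{Q}$-absolutely continuous part, and a doubly singular remainder $\mathbb{Q}_\xi$; but the singularity of $\mathbb{Q}_\xi$ with respect to $\mathcal{Q}$ is presented as a \emph{consequence} of the proposition (again citing \cite{NN3}), not as an input to its proof. Your route reverses this order: you start from the Radon--Nikodym decomposition of the extended measure $\nu$ and read off the three local densities. That is a perfectly natural strategy, and your supermartingale identity $\mathcal{Q}[\mathds 1_{g\le s}\Gamma_t]=\mathbb{P}[X_t\Theta_{s,t}\Gamma_t]$ is exactly the right tool for the $\mathcal{Q}$-a.e.\ asymptotics.

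There is, however, a genuine gap. You list three quantities whose $\mathcal{Q}$-a.e.\ vanishing must be shown, namely $\mathbb{P}[Z_\infty|\mathcal{F}_t]/X_t$, $\eta_t/X_t$ and $\Pi_t/X_t$, but your argument only covers two of them: the case of a potential (for $\Pi$) and the case of a uniformly integrable martingale (for $\mathbb{P}[Z_\infty|\mathcal{F}_t]$). The process $\eta$, being the local density of the doubly singular piece $\nu_s$, is a nonnegative martingale which is \emph{not} uniformly integrable (indeed $\eta_t\to 0$ $\mathbb{P}$-a.s.\ while $\mathbb{P}[\eta_t]=\nu_s(\Omega)$ is constant), so neither of your two cases applies to it. With your identity one gets
\[
\mathcal{Q}\Big[\mathds 1_{g\le s}\,\tfrac{\eta_t}{X_t}\Big]=\mathbb{P}[\Theta_{s,t}\eta_t]=\nu_s[\Theta_{s,t}]\ \underset{t\to\infty}{\longrightarrow}\ \nu_s[g\le s],
\]
and therefore the missing step is precisely $\nu_s[g<\infty]=0$, i.e.\ that any finite measure on $\mathcal{F}$ which is locally absolutely continuous with respect to $\mathbb{P}$, when restricted to $\{g<\infty\}$, is absolutely continuous with respect to $\mathcal{Q}$. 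This is not automatic from the mutual singularity of $\mathbb{P}$ and $\mathcal{Q}$ alone, and it is exactly the content of the statement (quoted in the paper after the proposition) that $\mathbb{Q}_\xi\perp\mathcal{Q}$, which the paper attributes to \cite{NN3}. In other words, by taking the Radon--Nikodym decomposition as your starting point you have pushed the substantive difficulty into this unproved fact; you need an independent argument for it (for instance, showing that the $\mathcal{Q}$-a.e.\ limit $\ell$ of $\eta_t/X_t$ satisfies $\ell\cdot\mathcal{Q}\le\nu_s$ as measures on $\mathcal{F}$ and then invoking $\nu_s\perp\mathcal{Q}$, or, alternatively, abandoning the a~priori three-way split and defining $z_\infty$ directly as the $\mathcal{Q}$-a.e.\ limit of $Z_t/X_t$, after which the nonnegativity of $\xi_t:=Z_t-\mathbb{P}[Z_\infty|\mathcal{F}_t]-M_t(z_\infty)$ becomes the point requiring work). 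The rest of your outline --- the Riesz reduction to a martingale, the identification $W=Z_\infty$, and the uniqueness argument --- is correct.
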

\noindent
This result implies the following characterisation of martingales of the form $(M_t(F))_{t \geq 0}$:
\begin{corollary}
Let us suppose that the assumptions of Theorem \ref{all} are satisfied, and that
 $A_{\infty} = \infty$, $\mathbb{P}$-almost surely. Then, a c\`adl\`ag, nonnegative $\mathbb{P}$-martingale
$(Z_t)_{t\geq 0}$ is of the form $(M_t(F))_{t \geq 0}$ for a nonnegative, $\mathcal{Q}$-integrable
 functional
$F$, if and only if:
\begin{equation}
\mathbb{P}[Z_0] = \mathcal{Q} \left( \underset{t \rightarrow \infty}{\lim} \, \frac{Z_t}{X_t} \right). \label{pz0}
\end{equation}
Note that, by Proposition \ref{decomposition}, the limit above necessarily exists $\mathcal{Q}$-almost 
everywhere.
\end{corollary}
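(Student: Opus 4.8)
The plan is to reduce both implications to the decomposition of Proposition \ref{decomposition}, applied to the nonnegative martingale $Z$ (which is in particular a nonnegative c\`adl\`ag supermartingale), together with an evaluation of the three resulting pieces at time $t=0$.

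For the direct implication, suppose $Z_t = M_t(F)$ for a nonnegative $\mathcal{Q}$-integrable $F$. The proposition preceding Proposition \ref{decomposition} gives $M_t(F)/X_t \to F$ $\mathcal{Q}$-almost everywhere, so the limit $\lim_{t \to \infty} Z_t/X_t$ exists $\mathcal{Q}$-a.e. and equals $F$; hence $\mathcal{Q}\left(\lim_{t \to \infty} Z_t/X_t\right) = \mathcal{Q}[F]$. On the other hand, the defining relation of Proposition \ref{MtF}, namely $\mathcal{M}^F[\Gamma_t] = \mathbb{P}[\Gamma_t M_t(F)]$ with $\mathcal{M}^F = F.\mathcal{Q}$, applied with $t=0$ and the constant functional $\Gamma_0 \equiv 1$, yields $\mathbb{P}[Z_0] = \mathbb{P}[M_0(F)] = \mathcal{M}^F[\Omega] = \mathcal{Q}[F]$. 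Combining the two equalities gives (\ref{pz0}).

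For the converse, assume $Z$ is a nonnegative c\`adl\`ag $\mathbb{P}$-martingale satisfying (\ref{pz0}). Being a nonnegative supermartingale, $Z$ converges $\mathbb{P}$-a.s. to $Z_{\infty} \in L^1_+$, and Proposition \ref{decomposition} provides the decomposition
$$Z_t = M_t(z_{\infty}) + \mathbb{P}[Z_{\infty} \mid \mathcal{F}_t] + \xi_t,$$
where $z_{\infty} = \lim_{t \to \infty} Z_t/X_t$ ($\mathcal{Q}$-a.e.) is $\mathcal{Q}$-integrable and $\xi$ is a nonnegative c\`adl\`ag $\mathbb{P}$-supermartingale. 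Evaluating at $t=0$ and taking $\mathbb{P}$-expectations, and using $\mathbb{P}[M_0(z_{\infty})] = \mathcal{Q}[z_{\infty}]$ (the $t=0$ computation from the direct implication, now applied to $F = z_{\infty}$) together with $\mathbb{P}[\mathbb{P}[Z_{\infty} \mid \mathcal{F}_0]] = \mathbb{P}[Z_{\infty}]$, one obtains
$$\mathbb{P}[Z_0] = \mathcal{Q}[z_{\infty}] + \mathbb{P}[Z_{\infty}] + \mathbb{P}[\xi_0].$$
Since (\ref{pz0}) says exactly $\mathbb{P}[Z_0] = \mathcal{Q}[z_{\infty}]$, and all three terms on the right are nonnegative, we conclude $\mathbb{P}[Z_{\infty}] = 0$ and $\mathbb{P}[\xi_0] = 0$. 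The first forces $Z_{\infty} = 0$ $\mathbb{P}$-a.s., hence $\mathbb{P}[Z_{\infty} \mid \mathcal{F}_t] = 0$; the second, combined with the supermartingale inequality $\mathbb{P}[\xi_t] \leq \mathbb{P}[\xi_0] = 0$ and the c\`adl\`ag property of $\xi$, forces $\xi$ to be indistinguishable from $0$. Therefore $Z_t = M_t(z_{\infty})$, i.e. $Z$ is of the form $(M_t(F))_{t \geq 0}$ with $F = z_{\infty}$ nonnegative and $\mathcal{Q}$-integrable.

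I expect no single hard obstacle here: once Proposition \ref{decomposition} is in hand the argument is essentially bookkeeping. The points requiring care are that the limit $z_{\infty}$ appearing in the decomposition is literally the quantity $\lim_{t \to \infty} Z_t/X_t$ of (\ref{pz0}), so that the hypothesis can be substituted, and the termwise nonnegativity that allows a single scalar identity to annihilate two of the three components; one must also confirm that the vanishing of $\mathbb{P}[\xi_0]$ propagates to indistinguishability of $\xi$ from zero through the supermartingale property and right-continuity, rather than merely giving $\xi_t = 0$ for each fixed $t$.
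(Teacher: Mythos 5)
Your argument is correct and follows exactly the route the paper intends: the corollary is presented there as a direct consequence of Proposition \ref{decomposition}, and your derivation (identifying $\mathbb{P}[M_0(F)]=\mathcal{Q}[F]$ via the defining relation of $\mathcal{M}^F$ at $t=0$, then using termwise nonnegativity of the three-part decomposition to kill the $\mathbb{P}[Z_\infty|\mathcal{F}_t]$ and $\xi_t$ components) is the standard way to make that deduction precise. The care you take in upgrading $\mathbb{P}[\xi_0]=0$ to indistinguishability of $\xi$ from zero via the supermartingale inequality and right-continuity is exactly the right final step.
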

\noindent
If in Proposition \ref{decomposition}, $(Z_t)_{t \geq 0}$ is a nonnegative martingale, the corresponding 
decomposition can be interpreted as a Radon-Nykodym decomposition of finite measures. Indeed, let us observe that
since the space satisfies the property (NP), there exists a unique finite measure $\mathbb{Q}_Z$ 
on $(\Omega, \mathcal{F})$, such that for all $t \geq 0$, its restriction to $\mathcal{F}_t$ has 
density $Z_t$ with respect to $\mathbb{P}$. If one writes the decomposition
$$Z_t = M_t(z_{\infty}) + \mathbb{P} [Z_{\infty} | \mathcal{F}_t] + \xi_t,$$
one deduces:
$$\mathbb{Q}_Z = z_{\infty} \,. \mathcal{Q} + Z_{\infty} \, . \mathbb{P} + \mathbb{Q}_{\xi},$$
where the restriction of $\mathbb{Q}_{\xi}$ to $\mathcal{F}_t$ has density $\xi_t$ with respect to 
$\mathbb{P}$. In \cite{NN3}, it is proved that $\mathbb{Q}_{\xi}$ is 
singular with respect to $\mathbb{P}$ and $\mathcal{Q}$, hence one has
 a decomposition of $\mathbb{Q}_Z$ into three parts:
\begin{itemize}
\item A part which is absolutely continuous with respect to $\mathbb{P}$.
\item A part which is absolutely continuous with respect to $\mathcal{Q}$. 
\item A part which is singular with respect to $\mathbb{P}$ and $\mathcal{Q}$.
\end{itemize}
\noindent
This decomposition is unique, as a consequence of the uniqueness of the Radon-Nykodym decomposition.

Another interesting problem is to find a relation between the measure $\mathcal{Q}$ and the last passage
times of $(X_t)_{t \geq 0}$ at a given level, which can be different from zero. The following result 
proves this relation when $(X_t)_{t \geq 0}$ is continuous:
\begin{proposition} \label{xta}
Let us suppose that the assumptions of Theorem \ref{all} are satisfied, the submartingale 
$(X_t)_{t \geq 0}$ is continuous and $A_{\infty} = \infty$ almost surely 
under $\mathbb{P}$. For $a \geq 0$, let $g^{[a]}$ be the last hitting time of the 
interval $[0,a]$:
$$g^{[a]} = \sup \{t \geq 0, X_t \leq a \}.$$
Then, the measure $\mathcal{Q}$ satisfies the following formula, available for any $t \geq 0$, and for all
$\mathcal{F}_t$-measurable, bounded variables $\Gamma_t$:
\begin{equation}\mathcal{Q} \left[ \Gamma_t \, \mathds{1}_{g^{[a]} \leq t} \right] = \mathbb{P} \left[\Gamma_t (X_t-a)_+
 \right]. \label{Qa}
\end{equation}
\noindent
Moreover, $((X_t-a)_+)_{t \geq 0}$ is a submartingale of class $(\Sigma)$ and the $\sigma$-finite measure 
obtained by applying 
Theorem \ref{all} to it is equal to $\mathcal{Q}$. 
\end{proposition}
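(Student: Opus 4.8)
The plan is to prove the master formula (\ref{Qa}) first, by a direct optional-stopping computation on $\mathcal{Q}$, and then to deduce the ``moreover'' part from the uniqueness clause of Theorem \ref{all}. I may assume $a>0$, since for $a=0$ one has $g^{[0]}=g$ and $(X_t-0)_+=X_t$, so (\ref{Qa}) is exactly the defining property of $\mathcal{Q}$. Fix $t\geq 0$ and a bounded nonnegative $\mathcal{F}_t$-measurable $\Gamma_t$, and introduce the stopping time $\tau:=\inf\{s>t:\ X_s\leq a\}$ (with $\inf\emptyset=\infty$), which is an $(\mathcal{F}_s)$-stopping time by the d\'ebut theorem, valid under the N-usual conditions. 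Three pathwise facts drive the argument. First, by continuity of $X$, $\{\tau=\infty\}=\{X_s>a\ \forall s>t\}=\{g^{[a]}\leq t\}$. Second, on $\{\tau<\infty\}$ continuity gives $X_\tau=X_t\wedge a$: if $X_t>a$ then $\tau>t$ is the first return to level $a$, so $X_\tau=a$, while if $X_t\leq a$ then $\tau=t$ and $X_\tau=X_t$. Third, $\tau<\infty$ $\mathbb{P}$-almost surely: since $dA$ is carried by $\{X=0\}$ and $A_\infty=\infty$ $\mathbb{P}$-a.s., $X$ vanishes at arbitrarily large times and hence a.s. returns to $[0,a]$ after time $t$.

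The computation rests on the optional version of the master formula for $\mathcal{Q}$: for every stopping time $S$ and every bounded $\mathcal{F}_S$-measurable $\Gamma$,
$$\mathcal{Q}[\Gamma\,\mathds{1}_{g\leq S}\,\mathds{1}_{S<\infty}]=\mathbb{P}[\Gamma\,X_S\,\mathds{1}_{S<\infty}].\qquad(\star_S)$$
I would establish $(\star_S)$ first for stopping times with countably many values, splitting on $\{S=t_k\}\in\mathcal{F}_{t_k}$ and using the defining property of $\mathcal{Q}$ together with $\{g\leq S\}\cap\{S=t_k\}=\{g\leq t_k\}\cap\{S=t_k\}$, and then pass to general $S$ by dyadic approximation from above $S_n\downarrow S$, using continuity ($X_{S_n}\to X_S$), the monotone convergence $\{g\leq S_n\}\downarrow\{g\leq S\}$, and finiteness of the sets involved. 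Carrying this limit out on the merely $\sigma$-finite measure $\mathcal{Q}$ is the main technical obstacle. Granting $(\star_S)$, one checks for $a>0$ the event identity $\{g\leq t\}\cap\{\tau<\infty\}=\{g\leq\tau\}\cap\{\tau<\infty\}$: the inclusion $\subseteq$ is clear as $g\leq t\leq\tau$, and conversely on $(t,\tau)$ one has $X>a>0$ and $X_\tau=a>0$ (or $\tau=t$), so $X$ has no zero on $(t,\tau]$ and $g\leq\tau$ forces $g\leq t$. Applying the defining property of $\mathcal{Q}$ at the deterministic time $t$, then $(\star_S)$ at $S=\tau$, and using $\{\tau=\infty\}=\{g^{[a]}\leq t\}\subseteq\{g\leq t\}$,
$$\mathbb{P}[\Gamma_t X_t]=\mathcal{Q}[\Gamma_t\mathds{1}_{g\leq t}]=\mathcal{Q}[\Gamma_t\mathds{1}_{g^{[a]}\leq t}]+\mathcal{Q}[\Gamma_t\mathds{1}_{g\leq\tau}\mathds{1}_{\tau<\infty}]=\mathcal{Q}[\Gamma_t\mathds{1}_{g^{[a]}\leq t}]+\mathbb{P}[\Gamma_t X_\tau\mathds{1}_{\tau<\infty}].$$
Since $\tau<\infty$ $\mathbb{P}$-a.s. and $X_\tau=X_t\wedge a$, the last term equals $\mathbb{P}[\Gamma_t(X_t\wedge a)]$, and rearranging yields $\mathcal{Q}[\Gamma_t\mathds{1}_{g^{[a]}\leq t}]=\mathbb{P}[\Gamma_t(X_t-X_t\wedge a)]=\mathbb{P}[\Gamma_t(X_t-a)_+]$, which is (\ref{Qa}).

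For the ``moreover'' part, I would verify that $Y_t:=(X_t-a)_+$ is of class $(\Sigma)$. Writing $X=N+A$ with $N$ continuous (as $X$ and $A$ are), Tanaka's formula gives $Y_t=(X_0-a)_+ +\int_0^t\mathds{1}_{X_s>a}\,dX_s+\frac{1}{2}L_t^a$, where $L^a$ is the local time of $X$ at level $a$; as $dA$ is carried by $\{X=0\}$ and $a>0$, the drift term vanishes and the martingale part is $N_t^Y=(X_0-a)_+ +\int_0^t\mathds{1}_{X_s>a}\,dN_s$, while $\frac{1}{2}L^a$ is continuous, null at $0$ and carried by $\{X=a\}\subseteq\{Y=0\}$. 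Since $x\mapsto(x-a)_+$ is convex and nondecreasing, $Y$ is a true submartingale with $0\leq Y_t\leq X_t$ integrable, so $N^Y$ is a true martingale; thus $Y$ meets the hypotheses of Theorem \ref{all}, and its last zero is $\sup\{s:\ Y_s=0\}=g^{[a]}$. Applying Theorem \ref{all} to $Y$ yields a unique $\sigma$-finite measure $\mathcal{Q}^{(a)}$ with $\mathcal{Q}^{(a)}[g^{[a]}=\infty]=0$ and $\mathcal{Q}^{(a)}[\Gamma_t\mathds{1}_{g^{[a]}\leq t}]=\mathbb{P}[\Gamma_t Y_t]$. By (\ref{Qa}), $\mathcal{Q}$ satisfies the same master formula, and moreover $\mathcal{Q}[g^{[a]}=\infty]=0$ because, by the earlier proposition (valid since $A_\infty=\infty$ a.s.), $X_t\to\infty$ holds $\mathcal{Q}$-almost everywhere, forcing $g^{[a]}<\infty$ $\mathcal{Q}$-a.e. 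The uniqueness clause of Theorem \ref{all} applied to $Y$ then gives $\mathcal{Q}=\mathcal{Q}^{(a)}$, as claimed.
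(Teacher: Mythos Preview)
The paper itself does not give a proof of this proposition: the opening of Section~\ref{sec::4} defers all proofs to \cite{NN3}. Your argument is correct and matches the natural strategy suggested by the paper's own architecture. The key step is the optional-stopping extension $(\star_S)$ of the master formula, and this is exactly the ``extension of Doob's optional stopping theorem'' that \cite{NN1} is devoted to; you are right to flag the passage to a general stopping time on the merely $\sigma$-finite side as the only genuine technical hurdle, and you may simply invoke that result rather than redo the dyadic approximation. Once $(\star_S)$ is available, your event identity $\{g\leq t\}\cap\{\tau<\infty\}=\{g\leq\tau\}\cap\{\tau<\infty\}$ and the resulting computation $\mathcal{Q}[\Gamma_t\mathds{1}_{g^{[a]}\leq t}]=\mathbb{P}[\Gamma_t X_t]-\mathbb{P}[\Gamma_t(X_t\wedge a)]$ are clean and correct, as is the appeal to the uniqueness clause of Theorem~\ref{all} together with $X_t\to\infty$ $\mathcal{Q}$-a.e.\ for the identification $\mathcal{Q}^{(a)}=\mathcal{Q}$.

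Two small points. The claim ``if $X_t\leq a$ then $\tau=t$'' fails on the boundary $X_t=a$ when $X$ leaves $[0,a]$ immediately to the right of $t$, but the conclusion $X_\tau=X_t\wedge a$ on $\{\tau<\infty\}$ survives, since continuity still forces $X_\tau=a$ in that case. And for the ``moreover'' part, the quickest way to see that $\int_0^\cdot\mathds{1}_{X_s>a}\,dN_s$ is a \emph{true} martingale is the route you implicitly take: $(X_t-a)_+$ is a nonnegative true submartingale dominated by $X_t$, so Doob--Meyer (valid under the N-usual conditions) yields a true martingale plus a continuous increasing process, and by uniqueness of the continuous semimartingale decomposition this must coincide with the Tanaka decomposition, whence the increasing part is $\tfrac12 L^a$, carried by $\{X=a\}\subseteq\{Y=0\}$.
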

The relation between the measure $\mathcal{Q}$ and the last hitting time of a given level $a$
is one of the main ingredients of the proof of our penalisation result stated below. 
Since we are able to show this statement only when $(X_t)_{t \geq 0}$ is a diffusion satisfying 
some technical conditions, let us give another result, strongly related to penalisations, but available
under more general assumptions:
\begin{proposition}
Let us suppose that the assumptions of Theorem \ref{all} are satisfied, and $A_{\infty}= \infty$, 
$\mathbb{P}$-almost surely. Let $(F_t)_{t \geq 0}$ be a c\`adl\`ag,
 adapted, nonnegative, nonincreasing and uniformly bounded process, such that for some 
$a > 0$, one has for all $t \geq 0$, $X_t = X_{g^{[a]}}$ on the set $\{t \geq g^{[a]}\}$. Then, if 
$F_g$ is $\mathcal{Q}$-integrable, and if one
defines $F_{\infty}$  as the limit of $F_t$ for $t$ going to infinity (in particular, $F_{\infty} = F_{g^{[a]}}$ 
for $g^{[a]} < \infty$), one has:
$$\mathbb{P} [F_tX_t ] \underset{t \rightarrow \infty}{\longrightarrow} \mathcal{Q} [F_{\infty}]. $$
\end{proposition}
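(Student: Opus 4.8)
Under the hypotheses of Theorem~\ref{all} with $A_\infty=\infty$ $\mathbb{P}$-a.s., given a càdlàg, adapted, nonnegative, nonincreasing, uniformly bounded process $(F_t)_{t\geq0}$ such that $X_t=X_{g^{[a]}}$ on $\{t\geq g^{[a]}\}$ for some $a>0$, with $F_g$ being $\mathcal{Q}$-integrable, one has $\mathbb{P}[F_tX_t]\to\mathcal{Q}[F_\infty]$ as $t\to\infty$.

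The plan is to route everything through Proposition~\ref{xta}, which identifies the last hitting time $g^{[a]}$ of the level $a$ with the level-zero measure formula for the shifted submartingale $((X_t-a)_+)_{t\geq0}$. First I would observe that the freezing hypothesis $X_t=X_{g^{[a]}}$ on $\{t\geq g^{[a]}\}$ means the process stops evolving once it last leaves $[0,a]$; combined with monotonicity and boundedness of $F$, this should let me rewrite $\mathbb{P}[F_tX_t]$ in a form where the indicator $\mathds{1}_{g^{[a]}\leq t}$ appears naturally. The key computational identity I would aim for is to split $X_t=(X_t-a)_+ + (\text{remainder})$ and use \eqref{Qa} from Proposition~\ref{xta} to convert the expectation $\mathbb{P}[F_t(X_t-a)_+]$ into a $\mathcal{Q}$-integral of the form $\mathcal{Q}[F_t\,\mathds{1}_{g^{[a]}\leq t}]$, at least when $F_t$ is suitably $\mathcal{F}_t$-measurable. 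Since $F$ is only adapted (not $\mathcal{F}_t$-frozen), I would first handle $F_t$ by approximating or by using the fact that on $\{g^{[a]}\leq t\}$ one has $F_t\to F_\infty=F_{g^{[a]}}$ by the nonincreasing convergence.

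The main steps, in order: \textbf{(i)} reduce to the submartingale $Y_t:=(X_t-a)_+$ of class $(\Sigma)$, noting by Proposition~\ref{xta} that its associated measure is again $\mathcal{Q}$ and that $\{g^{[a]}\leq t\}$ is precisely the last-zero event for $Y$; \textbf{(ii)} use the freezing condition to argue that $\mathbb{P}[F_tX_t]$ and $\mathbb{P}[F_tY_t]$ (up to the constant-$a$ discrepancy supported on $\{g^{[a]}\leq t\}$) have the same limiting behavior, exploiting that on $\{g^{[a]}\leq t\}$ the value $X_t$ is frozen and $F_t$ has essentially reached $F_\infty$; \textbf{(iii)} apply the defining relation of $\mathcal{Q}$ for $Y$ to express the frozen contribution as $\mathcal{Q}[F_\infty\,\mathds{1}_{g^{[a]}\leq t}]$; \textbf{(iv)} let $t\to\infty$ and use $A_\infty=\infty$ together with $\mathcal{Q}[g=\infty]=0$ (equivalently $g^{[a]}<\infty$ $\mathcal{Q}$-a.e.) to get $\mathds{1}_{g^{[a]}\leq t}\uparrow 1$ $\mathcal{Q}$-a.e., then invoke dominated or monotone convergence with the $\mathcal{Q}$-integrability of $F_g$ to conclude $\mathcal{Q}[F_\infty\,\mathds{1}_{g^{[a]}\leq t}]\to\mathcal{Q}[F_\infty]$.

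The hard part will be step~(ii): controlling the error between $F_t$ and $F_\infty$ on the event $\{g^{[a]}\leq t\}$ uniformly enough to pass to the limit, and simultaneously showing that the contribution from the complementary event $\{g^{[a]}>t\}$ vanishes. On $\{g^{[a]}>t\}$ the process has not yet last-left $[0,a]$, so I would need a vanishing estimate showing $\mathbb{P}[F_tX_t\,\mathds{1}_{g^{[a]}>t}]\to0$; here $A_\infty=\infty$ should force $\mathbb{P}[g^{[a]}>t]\to 0$ appropriately, and boundedness of $F$ together with uniform integrability (or the submartingale structure) should bound $X_t$ on this event. The delicate point is that $(X_t)$ need not be uniformly integrable, so I cannot simply bound $\mathbb{P}[X_t\mathds{1}_{g^{[a]}>t}]$ crudely; I expect to need the specific $(\Sigma)$ decomposition and the martingale $M^f$ machinery from Section~\ref{sec::3.2}, or an application of the defining identity of $\mathcal{Q}$ itself to rewrite this error term as $\mathcal{Q}[F_t\,\mathds{1}_{g^{[a]}\leq t}]-\mathcal{Q}[F_t]$-type quantities that are then controlled by $\mathcal{Q}$-integrability of $F_g$.
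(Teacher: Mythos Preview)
The paper does not actually prove this proposition in the text; it only states it and refers to \cite{NN3}. That said, your proposal has both a misreading and a genuine gap, and it overlooks a much simpler direct argument.

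\textbf{The misreading.} The hypothesis ``$X_t = X_{g^{[a]}}$ on $\{t \geq g^{[a]}\}$'' is evidently a typo for ``$F_t = F_{g^{[a]}}$ on $\{t \geq g^{[a]}\}$'', as the parenthetical ``in particular, $F_\infty = F_{g^{[a]}}$ for $g^{[a]} < \infty$'' makes clear. Your reading that the submartingale $X$ itself freezes after $g^{[a]}$ would contradict the standing assumption $A_\infty = \infty$ under $\mathbb{P}$ (and $X_t \to \infty$ under $\mathcal{Q}$).

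\textbf{The gap.} Even after correcting this, your route through $Y_t = (X_t - a)_+$ does not close. Writing $X_t = Y_t + (X_t \wedge a)$, the extra term $\mathbb{P}[F_t\,(X_t \wedge a)]$ does \emph{not} tend to $0$: under $\mathbb{P}$, since $A_\infty = \infty$, the process $X$ visits $0$ (hence $[0,a]$) at arbitrarily large times, so $X_t \wedge a$ does not converge to $0$, while $F_t$ need not converge to $0$ under $\mathbb{P}$ either. Your step~(ii) therefore cannot succeed as outlined; the ``hard part'' you anticipate is an actual obstruction, not a technicality.

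\textbf{The direct argument.} The result is immediate from the defining relation of $\mathcal{Q}$ in Theorem~\ref{all}, applied with $\Gamma_t = F_t$ (bounded, $\mathcal{F}_t$-measurable):
\[
\mathbb{P}[F_t X_t] \;=\; \mathcal{Q}\bigl[F_t \, \mathds{1}_{g \leq t}\bigr],
\]
where $g$ is the last zero of $X$, not $g^{[a]}$. On $\{g \leq t\}$ one has $F_t \leq F_g$ since $F$ is nonincreasing, so the integrand is dominated by the $\mathcal{Q}$-integrable $F_g$. As $t \to \infty$, $\mathds{1}_{g \leq t} \to 1$ $\mathcal{Q}$-a.e.\ (because $\mathcal{Q}[g=\infty]=0$) and $F_t \downarrow F_\infty$, hence dominated convergence gives $\mathcal{Q}[F_t \, \mathds{1}_{g \leq t}] \to \mathcal{Q}[F_\infty]$. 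The freezing condition on $F$ at level $a$ is not even needed for this statement; Proposition~\ref{xta} is a red herring here.
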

\noindent
In \cite{NN3}, there are another version of this result, with slightly different assumptions. 
In order to obtain a penalisation result, we need to estimate the expectation of $F_t$ under 
$\mathbb{P}$, instead of the expectation of $F_tX_t$. This gives an extra factor, depending on 
$t$, in the asymptotics, which justifies some restriction on the process $(X_t)_{t \geq 0}$. 
Let us now describe the precise framework which is considered. 
We define $\Omega$ as the space of continuous functions from $\mathbb{R}_+$ to 
$\mathbb{R}_+$, $(\mathcal{F}^0_t)_{t \geq 0}$ as the natural filtration of $\Omega$, and 
$\mathcal{F}^0$, as the $\sigma$-algebra generated by $(\mathcal{F}^0_t)_{t \geq 0}$. 
The probability $\mathbb{P}^0$, defined on $(\Omega, \mathcal{F}^0)$, satisfies the following condition:
under $\mathbb{P}^0$, the canonical process is a recurrent diffusion in natural scale, starting from a fixed 
point $x_0 \geq 0$, with zero as an instantaneously reflecting barrier, and such that its speed measure 
is absolutely continuous with respect to Lebesgue measure on $\mathbb{R}_+$, with a density 
$m : \mathbb{R}^*_+ \rightarrow \mathbb{R}^*_+$, continuous, and such that $m(x)$ is 
equivalent to $cx^{\beta}$ when $x$ goes to infinity, for some $c > 0$ and $\beta > -1$. Moreover,
we suppose that there exists $C > 0$ such that for all $x > 0$, $m(x) \leq Cx^{\beta}$ if $\beta \leq 0$, and 
$m(x) \leq C(1+x^{\beta})$ if $\beta > 0$.
Let us now define the filtered probability space $(\Omega, \mathcal{F}, (\mathcal{F}_t)_{t \geq 0}, \mathbb{P})$ as
the N-augmentation of $(\Omega, \mathcal{F}^0, (\mathcal{F}^0_t)_{t \geq 0}, \mathbb{P}^0)$:
$(\Omega, \mathcal{F}, (\mathcal{F}_t)_{t \geq 0}, \mathbb{P})$ satisfies property (NP) and
under $\mathbb{P}$, the law of the canonical process is a diffusion with the same parameters as 
under $\mathbb{P}^0$. This diffusion is in natural scale, and one can deduce from this fact and 
the assumptions above that the canonical process $(X_t)_{t \geq 0}$ is a submartingale of class $(\Sigma)$. In 
particular, $X_t$ is integrable
 for all $t \geq 0$. Moreover,  the local time $(L_t)_{t \geq 0}$ of $(X_t)_{t \geq 0}$ at level zero, is its
increasing process. One deduces that Theorem \ref{all} applies, which defines the corresponding 
measure $\mathcal{Q}$, and we can check that we are in the situation where $L_{\infty} = \infty$, 
$\mathbb{P}$-almost surely. Under these assumptions, we can prove the following result: 
\begin{proposition} \label{pen5}
Let $(F_t)_{t \geq 0}$ be a c\`adl\`ag,
 adapted, nonnegative, uniformly bounded and nonincreasing process. We assume that there exists
$a >0$ such that for all $t \geq 0$, $X_t = 
X_{g^{[a]}}$ on the set $\{t \geq g^{[a]}\}$, we
define $F_{\infty}$  as the limit of $F_t$ for $t$ going to infinity, and we suppose
 that $F_{\infty}$ is integrable with respect to $\mathcal{Q}$.
Then, there exists $D>0$ such that for all $s \geq 0$ and for all events $\Lambda_s \in \mathcal{F}_s$:
$$t^{1/(\beta+2)} \, \mathbb{P} [F_t \mathds{1}_{\Lambda_s}] \underset{t \rightarrow \infty}{\longrightarrow}
 D \, \mathcal{Q} [ F_{\infty} \mathds{1}_{\Lambda_s} ].$$
\end{proposition}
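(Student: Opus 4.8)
The plan is to deduce Proposition~\ref{pen5} from the weighted penalisation result established just above, namely that $\mathbb{P}[F_t X_t] \to \mathcal{Q}[F_\infty]$, by supplying the one missing ingredient: the exact rate at which the random factor $X_t$ can be traded for the deterministic normalisation $t^{1/(\beta+2)}$. First I would record that the weighted statement still holds with the event $\mathds{1}_{\Lambda_s}$ inserted: since $(F_t)$ is nonincreasing and $\Lambda_s\in\mathcal F_s$, the process $t\mapsto F_{t\vee s}\mathds{1}_{\Lambda_s}$ is, for the purposes of the $t\to\infty$ limit, again nonnegative, adapted, nonincreasing and frozen after $g^{[a]}$, so the preceding proposition together with Proposition~\ref{xta} (applied at level $a$) gives $\mathbb{P}[F_t X_t\mathds{1}_{\Lambda_s}]\to\mathcal{Q}[F_\infty\mathds{1}_{\Lambda_s}]$. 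The whole difficulty is then reduced to proving
\[
t^{1/(\beta+2)}\,\mathbb{P}[F_t\mathds{1}_{\Lambda_s}] \sim D\,\mathbb{P}[F_t X_t\mathds{1}_{\Lambda_s}], \qquad t\to\infty,
\]
for a constant $D>0$ independent of $F$, $s$ and $\Lambda_s$; combined with the weighted limit this yields the claim.

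The heart of the matter is a large-time (local limit) analysis of the reflected diffusion, and this is where the Markov property is essential. Conditioning at time $s$, one has $\mathbb{P}[F_t\mathds{1}_{\Lambda_s}]=\mathbb{E}[\mathds{1}_{\Lambda_s}\,\mathbb{E}[F_t\mid\mathcal F_s]]$, so it suffices to control the decay of the semigroup quantities $\mathbb{E}_x[F_t]$ uniformly for $x$ in compacts. I would exploit the scaling structure of a diffusion in natural scale whose speed density satisfies $m(x)\sim c x^{\beta}$: the generator $\frac{1}{m(x)}\,d^2/dx^2$ is, to leading order, invariant under $x\mapsto\lambda x$, $t\mapsto\lambda^{\beta+2}t$, which forces the spatial scale to grow like $t^{1/(\beta+2)}$ and hence produces exactly the normalisation in the statement. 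To make this rigorous I would pass to Laplace transforms in $t$, where the relevant quantities become resolvents $(\theta-\mathcal L)^{-1}$ expressed through the increasing and decreasing $\theta$-harmonic functions and the Green function of the diffusion; the hypotheses $m(x)\le Cx^{\beta}$ (resp. $m(x)\le C(1+x^{\beta})$) guarantee the regular variation of these objects and let me compute their behaviour as $\theta\to0$, obtaining $\int_0^\infty e^{-\theta t}\mathbb{E}_x[F_t]\,dt\sim c'\,\theta^{1/(\beta+2)-1}$. A Karamata Tauberian theorem then gives $\mathbb{E}_x[F_t]\sim D\,t^{-1/(\beta+2)}\,\psi(x)$ with a nonnegative profile $\psi$ and $D>0$.

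Once this local limit estimate is in hand, I would substitute it into $\mathbb{E}[\mathds{1}_{\Lambda_s}\,\mathbb{E}[F_t\mid\mathcal F_s]]$ and pass to the limit by dominated convergence, the domination being provided by the uniform growth bounds on $m$; this gives existence of $\lim_t t^{1/(\beta+2)}\mathbb{P}[F_t\mathds{1}_{\Lambda_s}]$ and a representation of it as an integral of the profile against the law of the process at time $s$ restricted to $\Lambda_s$. To identify this limit with $D\,\mathcal{Q}[F_\infty\mathds{1}_{\Lambda_s}]$ I would run the same Tauberian computation for the weighted quantity $\mathbb{E}_x[F_t X_t]$, whose limit is already known to be $\mathcal{Q}[F_\infty\mathds{1}_{\Lambda_s}]$; the two computations share the same $\theta\to0$ profile, so the ratio of the leading coefficients is a single constant $D$, independent of $F$, $s$ and $\Lambda_s$, and strictly positive by nondegeneracy of $\psi$.

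I expect the main obstacle to be the rigorous local limit theorem for the diffusion: establishing the small-$\theta$ (equivalently large-$t$) asymptotics of the resolvent uniformly in the starting point and with enough control to interchange the limit with the expectation $\mathbb{E}[\mathds{1}_{\Lambda_s}\,\cdot\,]$. The delicate points are the regular-variation estimates on the $\theta$-harmonic functions built from $m(x)\sim cx^{\beta}$ (where the growth bounds on $m$ are used to bound error terms), the justification of dominated convergence for possibly unbounded starting configurations, and checking that the contribution of paths which have not yet entered the transient-under-$\mathcal{Q}$ regime is negligible at order $t^{-1/(\beta+2)}$.
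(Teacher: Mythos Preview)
Your plan correctly isolates the two ingredients—the weighted convergence $\mathbb{P}[F_tX_t\mathds{1}_{\Lambda_s}]\to\mathcal{Q}[F_\infty\mathds{1}_{\Lambda_s}]$ supplied by the preceding proposition (combined with Proposition~\ref{xta}), and a large-time scaling estimate for the diffusion coming from $m(x)\sim cx^\beta$—but the way you combine them has a genuine gap. You reduce the problem to ``semigroup quantities $\mathbb{E}_x[F_t]$'' and propose to analyse these via the resolvent $(\theta-\mathcal{L})^{-1}$ and a Tauberian theorem. That step is not well-posed: the resolvent and Green-function machinery acts on functions of the state, $P_th(x)=\mathbb{E}_x[h(X_t)]$, whereas $(F_t)_{t\ge0}$ is a general adapted \emph{path} functional. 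The Markov property yields $\mathbb{E}[F_t\mid\mathcal{F}_s]$ as a functional of the whole trajectory on $[0,s]$, not of $X_s$ alone, so there is no object ``$\mathbb{E}_x[F_t]$'' on which to run your Laplace-transform computation, and the asserted asymptotic $\int_0^\infty e^{-\theta t}\mathbb{E}_x[F_t]\,dt\sim c'\theta^{1/(\beta+2)-1}$ has no meaning for such $F$. The parallel computation you propose for $\mathbb{E}_x[F_tX_t]$ suffers from the same defect, so the ``ratio of leading coefficients'' identification of $D$ is not available either.

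What is missing is a last-passage decomposition that converts the path functional into a quantity to which one-dimensional diffusion estimates genuinely apply. Because $F$ is adapted and satisfies $F_t=F_{g^{[a]}}$ on $\{t\ge g^{[a]}\}$, one checks (by modifying a given path after time $t$ so that it never returns to $[0,a]$) that in fact $F_t=F_{\gamma_t}$ for every path, where $\gamma_t=\sup\{u\le t:X_u\le a\}$. One then conditions on $\gamma_t$ and applies the strong Markov property; the only diffusion input required is the tail $\mathbb{P}_x[T_{[0,a]}>t]$, or equivalently the growth of $\mathbb{E}[X_t]$, and this \emph{is} a semigroup quantity, so your scaling/Tauberian analysis (regular variation of the $\theta$-harmonic functions, Karamata) is exactly what produces the rate $t^{-1/(\beta+2)}$ and the universal constant $D$ there. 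This is the route taken in \cite{NN4}, following the Brownian template of \cite{NRY}: the resolvent computation is aimed at that single estimate, and the passage to a general $F$ goes through the last-passage decomposition rather than by feeding $F$ directly into the resolvent.
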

\noindent
The following penalisation result is a straightforward consequence of Proposition \ref{pen5}:
\begin{proposition}
Let $(F_t)_{t \geq 0}$ be a c\`adl\`ag,
 adapted, nonnegative, uniformly bounded and nonincreasing process. We assume that there exists
$a >0$ such that for all $t \geq 0$, $X_t = 
X_{g^{[a]}}$ on the set $\{t \geq g^{[a]}\}$, we
define $F_{\infty}$  as the limit of $F_t$ for $t$ going to infinity, and we suppose
 that $0 < \mathcal{Q} [F_{\infty}] < \infty$. Then, for all $t \geq 0$:
$$0< \mathbb{P} [F_t] < \infty,$$
and one can then define a probability measure $\mathbb{Q}_t$ on $(\Omega, \mathcal{F})$ by
$$\mathbb{Q}_t := \frac{F_t}{\mathbb{P}[F_t]} \, . \mathbb{P}.$$
Moreover, the probability measure:
$$\mathbb{Q}_{\infty} := \frac{F_{\infty}}{ \mathcal{Q} [F_{\infty}]} \, . \mathcal{Q}$$ 
is the weak limit of $\mathbb{Q}_t$ in the sense of penalisations,  i.e. for all $s \geq 0$, 
and for all events $\Lambda_s \in \mathcal{F}_s$,
$$\mathbb{Q}_t [\Lambda_s] \underset{t \rightarrow \infty}{\longrightarrow} \mathbb{Q}_{\infty} [\Lambda_s].$$
\end{proposition}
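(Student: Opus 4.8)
The plan is to deduce the statement directly from Proposition \ref{pen5}, which already carries all of the analytic weight; the remaining work is merely to normalise and to take a quotient of two applications of that proposition. Throughout I abbreviate the exponent by writing $\alpha = 1/(\beta+2)$, noting that $\alpha > 0$ because $\beta > -1$. Observe first that the present hypothesis $0 < \mathcal{Q}[F_\infty] < \infty$ contains, in particular, the $\mathcal{Q}$-integrability of $F_\infty$ that is required to invoke Proposition \ref{pen5}, while the strict positivity $\mathcal{Q}[F_\infty] > 0$ will be used to guarantee a nonvanishing limit in the denominator.

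First I would establish the bounds $0 < \mathbb{P}[F_t] < \infty$ for every fixed $t \geq 0$. Finiteness is immediate since $(F_t)_{t \geq 0}$ is uniformly bounded. For strict positivity I would apply Proposition \ref{pen5} with $s = 0$ and $\Lambda_s = \Omega$, which gives $t^{\alpha}\, \mathbb{P}[F_t] \to D\, \mathcal{Q}[F_\infty]$; as $D > 0$ and $\mathcal{Q}[F_\infty] > 0$, this limit is a strictly positive real, so $\mathbb{P}[F_t] > 0$ for all sufficiently large $t$. To extend this to every $t$, I would use that $(F_t)_{t \geq 0}$ is nonincreasing, hence $t \mapsto \mathbb{P}[F_t]$ is nonincreasing, and a nonincreasing function that is eventually strictly positive is strictly positive everywhere. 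This legitimises the definition $\mathbb{Q}_t = \frac{F_t}{\mathbb{P}[F_t]}\,.\,\mathbb{P}$. That $\mathbb{Q}_\infty = \frac{F_\infty}{\mathcal{Q}[F_\infty]}\,.\,\mathcal{Q}$ is a probability measure is equally immediate: it is nonnegative and assigns total mass $\mathcal{Q}[F_\infty]/\mathcal{Q}[F_\infty] = 1$ to $\Omega$.

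The heart of the argument is one computation. Fix $s \geq 0$ and $\Lambda_s \in \mathcal{F}_s$. By definition of $\mathbb{Q}_t$,
$$\mathbb{Q}_t[\Lambda_s] = \frac{\mathbb{P}[F_t \mathds{1}_{\Lambda_s}]}{\mathbb{P}[F_t]} = \frac{t^{\alpha}\, \mathbb{P}[F_t \mathds{1}_{\Lambda_s}]}{t^{\alpha}\, \mathbb{P}[F_t]}.$$
Applying Proposition \ref{pen5} to the numerator with the given event $\Lambda_s$, and to the denominator with $\Lambda_s$ replaced by $\Omega$, the numerator tends to $D\, \mathcal{Q}[F_\infty \mathds{1}_{\Lambda_s}]$ and the denominator to $D\, \mathcal{Q}[F_\infty] > 0$. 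Because the denominator has a strictly positive limit, the quotient converges to the quotient of the limits,
$$\mathbb{Q}_t[\Lambda_s] \underset{t \rightarrow \infty}{\longrightarrow} \frac{D\, \mathcal{Q}[F_\infty \mathds{1}_{\Lambda_s}]}{D\, \mathcal{Q}[F_\infty]} = \frac{\mathcal{Q}[F_\infty \mathds{1}_{\Lambda_s}]}{\mathcal{Q}[F_\infty]} = \mathbb{Q}_\infty[\Lambda_s],$$
which is exactly the asserted weak limit. The normalising factor $t^{\alpha}$ and the constant $D$ cancel, and this cancellation is precisely what converts the asymptotic equivalence of Proposition \ref{pen5} into an exact limiting probability.

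There is no genuine obstacle once Proposition \ref{pen5} is in hand: the statement is a formal corollary. The only points requiring attention are bookkeeping rather than difficulty — verifying strict positivity of $\mathbb{P}[F_t]$ for all $t$ (where the monotonicity of $(F_t)_{t \geq 0}$ is the decisive input) and confirming that the hypothesis $0 < \mathcal{Q}[F_\infty] < \infty$ delivers both the integrability needed to invoke Proposition \ref{pen5} and the nonvanishing denominator permitting the passage to the quotient. All of the real analytic content, namely the sharp rate $t^{-1/(\beta+2)}$ and the constant $D$, is already established in Proposition \ref{pen5}.
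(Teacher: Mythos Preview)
Your proof is correct and is precisely the argument the paper has in mind: the paper merely states that this proposition is a ``straightforward consequence of Proposition \ref{pen5}'' without spelling out the quotient computation, and your proposal fills in exactly that straightforward deduction, including the check that $\mathbb{P}[F_t] > 0$ for all $t$ via monotonicity.
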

\noindent
This penalisation result applies, in particular, to the power $2r$
of a Bessel process of dimension $2(1-r)$, for any $r \in (0,1)$ (in this case, the parameter $\beta$ 
is equal to $(1/r)-2$). For $r=1$, $(X_t)_{t \geq 0}$ is a reflected Brownian motion, and our result is very
 similar to the penalisation theorem stated in \cite{NRY} for the Brownian motion.

\providecommand{\bysame}{\leavevmode\hbox to3em{\hrulefill}\thinspace}
\providecommand{\MR}{\relax\ifhmode\unskip\space\fi MR }
% \MRhref is called by the amsart/book/proc definition of \MR.
\providecommand{\MRhref}[2]{%
  \href{http://www.ams.org/mathscinet-getitem?mr=#1}{#2}
}
\providecommand{\href}[2]{#2}

\end{document}